\DeclarePairedDelimiter{\floor}{\lfloor}{\rfloor}
\newtheorem{proposition}{Proposition}[section]
\newtheorem{lemma}[proposition]{Lemma}
\newtheorem{theorem}[proposition]{Theorem}
\newtheorem{corollary}[proposition]{Corollary}
\newtheorem{conjecture}{Conjecture}[section]
\theoremstyle{definition}
\newtheorem{remark}[proposition]{Remark}
\newtheorem{definition}[proposition]{Definition}
\renewcommand{\O}{\Omega}
\renewcommand{\P}{\mathbb{P}}
\newcommand{\C}{\mathbb{C}}
\renewcommand{\O}{\mathcal{O}}
\DeclareMathOperator{\Aut}{Aut}
\DeclareMathOperator{\gr}{gr}
\DeclareMathOperator{\Rees}{Rees}
\DeclareMathOperator{\Proj}{Proj}
\DeclareMathOperator{\Span}{Span}
\DeclareMathOperator{\Sym}{Sym}
\newcommand{\N}{\mathbb{N}}
\newcommand{\Q}{\mathbb{Q}}
\newcommand{\pr}{\mathbb{P}}
\newcommand{\scF}{\mathcal{F}}
\newcommand{\scG}{\mathcal{G}}
\newcommand{\scL}{\mathcal{L}}
\newcommand{\scV}{\mathcal{V}}
\newcommand{\scX}{\mathcal{X}}
\newcommand{\scO}{\mathcal{O}}
\newcommand{\scR}{\mathcal{R}}
\renewcommand{\ll}{\ell \ell}
\DeclareMathOperator{\DF}{DF}
\DeclareMathOperator{\tr}{tr}
\title[Non-reductive automorphism groups and K-stability]{Non-reductive automorphism groups, the Loewy filtration and K-stability}
\author[G. Codogni]{Giulio Codogni}
\address{Giulio Codogni, Dipartimento di Matematica e Fisica, Universit\`{a} Roma Tre, Largo San Leonardo Murialdo 1, 00146, Roma, Italy.}
\email{codogni@mat.uniroma3.it}
\author[R. Dervan]{Ruadha\'i Dervan}
\address{Ruadha\'i Dervan, DPMMS, Centre for Mathematical Sciences, Wilberforce Road, Cambridge CB3 0WB, United Kingdom, and Universit\'e libre  de Bruxelles\\
1050 Elsene / Ixelles (Belgium).}
\email{R.Dervan@dpmms.cam.ac.uk}
\begin{document}

\begin{abstract}
We study the K-stability of a polarised variety with non-reductive automorphism group. We construct a canonical filtration, called the Loewy filtration, of the co-ordinate ring to each variety of this kind, which destabilises the variety in several examples which we compute. We conjecture that this holds in general. This is an algebro-geometric analogue of Matsushima's theorem regarding the non-existence of constant scalar curvature K\"ahler metrics on manifolds with non-reductive automorphism group. As an application, we give an example of an orbifold del Pezzo surface without a K\"ahler-Einstein metric. 
\end{abstract}

\maketitle


One of the most important problems in complex geometry is to understand the relationship between the existence of certain canonical metrics and algebro-geometric notions of stability. For vector bundles, the fundamental result is that a vector bundle admits a Hermite-Einstein metric if and only if it is slope polystable. Moreover, when the vector bundle is unstable, it admits a unique Harder-Narasimhan filtration of subsheaves such that each quotient is semistable. 

The analogous question for polarised varieties $(X,L)$ is the question of existence of constant scalar curvature K\"ahler (cscK) metric in the first Chern class of $L$. Here the notion of stability is K-polystability, and the Yau-Tian-Donaldson conjecture states that $(X,L)$ admits a cscK metric if and only if it is K-polystable \cite{SD3}. Loosely speaking, one assigns a certain weight (called the Donaldson-Futaki invariant) to each flat degeneration (called a test configuration) of $(X,L)$; K-semistability requires that this weight is non-negative for every test configuration. This is a weaker notion than K-polystability, and it is known, for example, that when $X$ is smooth that the existence of a cscK metric implies K-semistability \cite{SD}. The Yau-Tian-Donaldson conjecture has recently been proven when $X$ is Fano with $L=-K_X$, so that the metric is K\"ahler-Einstein \cite{RB,CDS,GT}.

Through the work of Witt Nystr\"om \cite{DWN} and Sz\'ekelyhidi \cite{GS,ICM}, one can interpret test configurations as \emph{admissible filtrations} of the co-ordinate ring. Roughly speaking, a filtration is called admissible if it is multiplicative and satisfies a linear boundedness condition. This is reviewed in Section \ref{Preliminars}. 

The approach to K-polystability via filtrations has at least two advantages. The first is conceptual: by including filtrations with non finitely generated Rees algebra, the notion of K-polystability is enhanced; this is discussed in \cite{ICM}. A classical example where non-finitely generated filtrations naturally occur is Zariski's example, as presented in \cite[Section 5.5]{Z}. The second is more practical: in some situations, it is easier to produce and describe examples of filtrations rather than of test configurations.

A natural geometric situation in which $(X,L)$ admits no cscK metric is when $X$ is smooth and the automorphism group $\Aut(X,L)$ is non-reductive \cite{YM}. Therefore, in this case, one would expect that such $(X,L)$ is not K-polystable. The goal of the present work is to construct a canonical filtration of the co-ordinate ring of a variety with non-reductive automorphism group. We call this filtration the \emph{Loewy filtration}; it is defined in Section \ref{loewy}.


\begin{theorem}\label{intro_main} Let $(X,L)$ be a polarised variety. Then, the Loewy filtration is admissible. Moreover, the Loewy filtration is $\Aut(X,L)$-equivariant, and is trivial if and only if $\Aut(X,L)$ is reductive. \end{theorem}

This result means that we can test K-polystability with this filtration. 


\begin{conjecture}\label{introconj} Suppose $(X,L)$ is a polarised variety with non-reductive automorphism group. Then the Loewy filtration destabilises $(X,L)$. \end{conjecture}

In Section \ref{examples}, we prove this conjecture in several cases.

\begin{theorem}\label{intro_examples}The Loewy filtration destabilises the following varieties: 
\begin{itemize}
\item[(i)] $\pr^2$ blown up at one point with respect to all polarisations.
\item[(ii)] $\pr^2$ blown up at n points on a line, with $L=a H - b(E_1 \hdots + E_n)$.
\item[(iii)] The Hirzebruch surfaces with respect to all polarisations.
\item[(iv)] Some projective bundles over $\pr^1$ with respect to all polarisations.
\item[(v)] A projective bundle over $\pr^2$ with respect to the anti-canonical polarisation.
\item[(vi)] An orbifold del Pezzo surface with respect to the anti-canonical polarisation.
\end{itemize}
\end{theorem}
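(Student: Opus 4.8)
The plan is to verify each of the six items by a common three-step computation, specialised to the geometry of the variety in question. \emph{Step 1 (the group).} First I would determine $\Aut(X,L)$, its Levi decomposition $\Aut(X,L)^{\circ} = G_{\mathrm{red}} \ltimes U$ with unipotent radical $U$, and the nilpotent Lie algebra $\mathfrak{u} = \operatorname{Lie}(U)$. For $\mathbb{F}_1 = \pr^2$ blown up at a point and for the Hirzebruch surfaces, $\Aut(X,L)$ is a parabolic-type subgroup with an explicit abelian (respectively Heisenberg) unipotent radical; for $\pr^2$ blown up at $n$ collinear points the unipotent automorphisms are the elations of $\pr^2$ with axis the line through the points, so $U \cong \mathbb{G}_a$; and for the projective bundles $\pr(E)$ the unipotent automorphisms come from $\operatorname{Hom}$ between the non-isomorphic summands (sub-quotients) of $E$. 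In cases (v)--(vi) I would first pin down $\Aut$ of the specified bundle, respectively of the orbifold, in the same way.

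\emph{Step 2 (the filtration and its weight).} Granting the construction of the first theorem --- which by its name I take to be the radical (equivalently socle) filtration of $R_m = H^0(X,L^m)$ regarded as a module over the enveloping algebra of $\mathfrak{u}$ --- I would make the $\mathfrak{u}$-action explicit and read off, for each $m$, the socle layer $\phi$ of every basis element, forming the weight
\[
w(m) = \sum_{i} i\,\dim \gr^i_{\mathrm{Loewy}} R_m = b_0 m^{n+1} + b_1 m^n + O(m^{n-1}).
\]
In the toric items (i), (iii), (iv) this is cleanest: $R_m$ has the lattice-point basis of the dilated polytope $mP$, the generators of $\mathfrak{u}$ act by the elementary shift operators, and the socle depth $\phi$ is the piecewise-linear lattice distance to $\partial P$ in the shift directions. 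Euler--Maclaurin then gives $b_0 = \int_P \phi\,dV$, with the boundary integral $\tfrac12\int_{\partial P}\phi\,d\sigma$ and the subleading part of $\phi$ feeding into $b_1$, while $a_0 = \operatorname{Vol}(P)$ and $a_1 = \tfrac12\operatorname{Vol}(\partial P)$. For the non-toric item (ii) I would instead work on $\pr^2$, where the elation acts on $\mathbb{C}[x_0,x_1,x_2]$ by a single locally nilpotent derivation whose socle depth is essentially a coordinate degree, and then impose the vanishing-to-order-$b$ conditions at the $n$ points that cut out $R_m$.

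\emph{Step 3 (the sign).} With $a_0,a_1,b_0,b_1$ in hand, the Donaldson-Futaki invariant of the filtration is, up to the positive normalisation and sign convention fixed in Section \ref{Preliminars}, a positive multiple of $a_1 b_0 - a_0 b_1$; the content of the theorem is that this quantity is strictly negative in every case, which is now a finite check. For (v)--(vi) I would obtain $a_0,a_1$ by Riemann--Roch: via the relative Euler sequence for the bundle over $\pr^2$, and via stacky Riemann--Roch for the orbifold del Pezzo, where the orbifold loci modify $a_1$.

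\emph{Main obstacle.} I expect the real work to lie in Step 2, namely controlling the \emph{subleading} weight coefficient $b_1$, since the sign of $a_1 b_0 - a_0 b_1$ is governed by the interaction of $b_1$ with $a_1$ and is not forced a priori; this demands the full $\mathfrak{u}$-module structure of $R_m$ for all $m$, not merely its leading asymptotics. The toric cases are tractable because $\phi$ is piecewise linear and the boundary contribution to $b_1$ is explicit, but item (ii) requires honest bookkeeping of the vanishing conditions, and the orbifold del Pezzo of (vi) --- the example promised in the abstract --- requires the orbifold correction to $a_1$ to be handled with care, as it is precisely this term that determines whether the final number comes out negative.
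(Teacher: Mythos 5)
Your three-step skeleton is exactly the paper's: identify the unipotent radical $U$ of $\Aut(X,L)$, describe the Loewy filtration of each $R_k=H^0(X,L^k)$ explicitly enough to extract $a_0,a_1,b_0,b_1$, and check that $b_0a_1-b_1a_0<0$. Your Euler--Maclaurin/polytope packaging of the toric cases (i), (iii)--(v) is an equivalent reformulation of the paper's direct summation over weight spaces (in those examples the Loewy layer of a monomial is an affine-linear function of the lattice point, e.g.\ $(a-b)k-\deg x$ in case (i)), and your plan for (ii) --- a coordinate-degree filtration plus bookkeeping of vanishing conditions, which the paper handles by Riemann--Roch on $\P^1$ --- is also the paper's. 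So the route is right, but three of your specific claims are wrong in ways that would derail the execution.

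First, ``radical (equivalently socle)'' is not a harmless parenthesis: these are \emph{different} filtrations, related only by duality, $\dim\gr_i(\scF_{\bullet}V)=\dim\gr_i(\scS_{\bullet}V^{\vee})$, and the paper states explicitly that the Donaldson--Futaki invariant of the socle filtration in general differs from that of the Loewy (radical) filtration. In case (i) the radical layer of a monomial is $(a-b)k-\deg x$ while its socle layer is $\deg x+1$; these give different weight polynomials, so you must commit to the radical filtration throughout. Second, in case (ii) the unipotent radical is not $\G_a$: the elations with axis $\ell$ (centre varying along $\ell$) form the two-dimensional translation group $\G_a^2$, the paper's matrices with two free entries in the last column. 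This is consequential because the radical filtration depends on the acting group: its first step is $\mathfrak{u}\cdot R_k$, and since $R_k$ is \emph{not} a monomial subspace (it encodes vanishing at points such as $[1:1:0]$), one has $z\partial_x(R_k)\subsetneq z\partial_x(R_k)+z\partial_y(R_k)$ in general. Concretely, for $n=3$, $L=4H-E_1-E_2-E_3$, $k=1$, the section $z\partial_y(x^3y-xy^3)=z(x^3-3xy^2)$ lies in $\mathfrak{u}\cdot R_1$ but not in $z\partial_x(R_1)$, because every $\partial_x$-antiderivative of $x^3-3xy^2$ takes the value $\tfrac14$ at $[1:0:0]$. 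So with your $\G_a$ you would be computing a different filtration, not the Loewy filtration of the statement; with the correct $U$ the filtration is simply the $z$-degree filtration. Third, for (vi) no stacky Riemann--Roch is needed and there is no orbifold correction to $a_1$: the paper obtains $F$ from the blow-up $X$ of $\P^2$ at three collinear points by contracting the strict transform of the line, a $(-2)$-curve, so the singularity is $A_1$ (du Val) and $\nu^*K_F=K_X$; since that curve is $U$-fixed, $\nu^*$ identifies the Loewy filtration of $R(F,-K_F)$ with that of $R(X,-K_X)$, and (vi) is literally the specialisation $c=n=3$ of the formula proved in (ii). Your direct approach on $F$ would additionally force you to determine $\Aut(F)$ on the singular surface, which is avoidable. (Minor: the unipotent radical for a Hirzebruch surface is the abelian vector group $H^0(\P^1,\O(n))$, not a Heisenberg group.) Finally, bear in mind that the theorem \emph{is} the computations: in case (ii) the sign check is a genuine inequality, which the paper reduces to $-c^3+3c^2-3c+n<0$ for $c=a/b>n\geq 2$, so your Step 3 is not a formality that can be waved through.
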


It follows that none of the above polarised varieties admit constant scalar curvature K\"ahler metrics. 

A motivation for studying Conjecture \ref{introconj} is its relation to the proof of the Yau-Tian-Donaldson conjecture. One of the main technical steps in Chen-Donaldson-Sun's proof that a K-polystable Fano manifold admits a K\"ahler-Einstein metric was to show, using analytic methods, that if a Kawamata log terminal Fano variety admits a K\"ahler-Einstein metric, then its automorphism group is reductive \cite[Theorem 4]{CDS3}. It is known, however, that the existence of a K\"ahler-Einsten metric on such singular Fano varieties implies K-polystability \cite{RB}. A proof of Conjecture \ref{introconj} would therefore give an algebraic proof of Chen-Donladson-Sun's result. It is natural to expect that this would be important more generally in any attempt to show that K-polystability of a polarised variety implies the existence of a constant scalar curvature K\"ahler metric.

Another motivation for this work is to give a new method of destabilising varieties. All previous destabilising test configurations have either arisen from holomorphic vector fields, or used a particularly simple flat degeneration of the variety, namely deformation to the normal cone \cite{RT}. Stability with respect to such test configurations is called slope stability, and it is known that slope stability is strictly weaker to K-polystability. For example, the blow-up of $\pr^2$ at two points is slope stable, but admits no cscK metric \cite{PR}. Our method gives an algebro-geometric proof that the blow-up $\pr^2$ at two points is K-unstable.

\

\noindent {\bf Notation and conventions:} We often use the same letter to denote a divisor and the associated line bundle, and mix multiplicative and additive notation for line bundles. A polarised variety $(X,L)$ is a normal projective variety $X$ together with an ample line bundle $L$. Our results are independent of scaling $L$, as such we sometimes assume that $L$ is very ample and projectively normal.

\

\noindent {\bf Acknowledgements:} This project started during the school ``Minicourses on Stability'' at the University of Coimbra in April 2014; we thank the organisers for the stimulating environment. We would like to thank Giovanni Cerulli Irelli, Jesus Martinez Garcia, Julius Ross, Roberto Svaldi, Filippo Viviani and Xiaowei Wang for useful discussions. Both authors would especially like to thank David Witt Nystr\"om and Jacopo Stoppa for several discussions on the present work. We thank the referee for several helpful comments. 

The research leading to these results has received funding from the European Research Council under the European Union's Seventh Framework Programme (FP7/2007-2013) / ERC Grant agreement no. 307119. GC was funded by the grants FIRB 2012 ``Moduli Spaces and Their Applications'' and by the ERC StG 307119 - StabAGDG. RD was funded by a studentship associated to an EPSRC Career Acceleration Fellowship (EP/J002062/1) and a Fondation Wiener-Anspach Scholarship.

\section{Preliminaries on K-polystability}\label{Preliminars}

\begin{subsection}{Filtrations and K-semistability}
Let $(X,L)$ be a polarised variety. We are interested in the algebro-geometric concept of K-polystability; to define it, we associate to each filtration of the co-ordinate ring of $(X,L)$ a weight called the Donaldson-Futaki invariant. In this section we proceed in a purely algebraic way. In Section \ref{testconfigurations}, we will explain, following \cite{GS,ICM}, how to describe these concepts in a more geometric, and perhaps more familiar, language.

\begin{definition}\cite{DWN}\label{decreasingfiltration} 
Denote the  co-ordinate ring of $(X,L)$ by \begin{align*}R(X,L) &= \bigoplus_{k \geq 0} R_k, \\  &= \bigoplus_{k \geq 0} H^0(X,L^k).\end{align*}  We define an admissible decreasing filtration, or, for short, a \emph{decreasing filtration}, $\scF$ of $R$ to be sequence of vector subspaces 
$$\cdots \supset \scF_i R \supset \scF_{i+1} R \supset \cdots$$
which is
\begin{itemize}
\item[(i)] \emph{linearly right bounded}: there exists a constant $C$ such that $\scF_{Ck} R_k = \{0\}$ for every $k$, 
\item[(ii)] \emph{pointwise left bounded}: for every $k$ there exists a $j=j(k)$ such that $\scF_j R_k = H^0(X,L^k)$,
\item[(iii)] \emph{multiplicative}: $(\scF_i R_l) (\scF_j R_m) \subset \scF_{i+j} R_{l+m}$, 
\item[(iv)] \emph{homogeneous}: if $f\in \scF_iR$ then each homogeneous piece of $f$ is in $\scF_iR$; in other words $\scF_iR=\bigoplus_k\scF_iR_k$.
\end{itemize} 
Here we have denoted $\scF_iR_k=\scF_iR \cap R_k$.
\end{definition}

The Loewy filtration defined in Section \ref{loewy}, which is the main object of study in the present work, is an example of a decreasing filtration. Associated to each decreasing filtration are the following algebraic objects.

\begin{definition}Given a decreasing filtration $\scF$ on $R(X,L)$ we define its
\begin{itemize}
\item[(i)] \emph{Rees algebra} as $\Rees(\scF) = \bigoplus_{i}(\scF_iR)t^i \subset R[t],$
\item[(ii)] \emph{graded algebra} as $\gr(\scF) = \bigoplus_{i} (\scF_iR)/(\scF_{i+1}R).$
\end{itemize}
We say a filtration is \emph{finitely generated} if its Rees algebra is finitely generated as a $\C[t]$ module. Note that, in the definition of the graded algebra, the grade of $(\scF_iR)/(\scF_{i+1}R)$ is $i$.
\end{definition}

To each decreasing filtration, one can associate the following weight functions.

\begin{definition}\label{polynomials} Given a filtration $\scF$, define corresponding functions 
\begin{align*} 
 w(k) &= \sum_{i } i(\dim \scF_iR_k - \dim \scF_{i+1}R_k ), \\ 
 d(k) &= \sum_{i } i^2(\dim \scF_iR_k - \dim\scF_{i+1}R_k ). 
 \end{align*} 
We call $w(k)$ the \emph{weight function} and $d(k)$ the \emph{trace squared} function. Let $n$ be the dimension of $X$. We say that a filtration is \emph{polynomial} if $w(k)$ and $d(k)$ are polynomials of degree $n+1$ and $n+2$ respectively, for sufficiently large $k$. \end{definition}

Another invariant, which is associated just to the pair $(X,L)$ and is independent of the filtration, is the \emph{Hilbert function}
\begin{align*}h(k)= \dim R_k.\end{align*}
For $k$ sufficiently large this is always a polynomial of degree $n$.

Let $\scF$ be a polynomial filtration. Expand the associated Hilbert, weight and trace squared polynomials of $\scF$ respectively as \begin{align*}h(k) &=a_0k^n+a_1k^{n-1}+O(k^{n-2}), \\ w(k) &= b_0k^{n+1}+b_1k^n+O(k^{n-1}), \\ d(k) &= d_0 k^{n+2} + O(k^{n+1}).\end{align*}

\begin{definition}\label{donaldsonfutaki} We define the \emph{Donaldson-Futaki invariant} of a polynomial filtration to be $$\DF(\scF) = \frac{b_0a_1 - b_1 a_0}{a_0}.$$ We define the \emph{norm} of a polynomial filtration to be $$\|\scF\|_2 = \frac{d_0a_0 - b_0^2}{a_0}.$$\end{definition}

\begin{definition}\label{Ksemistable} We say that $(X,L)$ is \emph{K-semistable} if for all polynomial filtrations $\scF$, the Donaldson-Futaki invariant $\DF(\scF)$ is non-negative. If $(X,L)$ is not K-semistable we say that it is \emph{K-unstable}. 
\end{definition}

As proven in \cite{DWN,GS} and reviewed in Section \ref{testconfigurations}, finitely generated filtrations are equivalent to test configurations and the respective Donaldson-Futaki invariants are equal. In particular, all finitely generated filtrations are polynomial. On the other hand,  \cite[Example 4]{GS} is an example of a polynomial filtration which is not finitely generated. The coherence of our definition of the Donaldson-Futaki invariant and K-semistability for polynomials filtrations with the convention used in the literature is proved in Section \ref{SectionApproximation}, see in particular Theorem \ref{ApproximationTheorem}.

The conjecture that motivates this definition is the following. We give a precise definition of K-polystability in Section \ref{testconfigurations}, for the moment all we need is that K-polystability implies K-semistability.

\begin{conjecture}[Yau-Tian-Donaldson]\cite{SD2}\label{YTD} A smooth polarised variety $(X,L)$ is K-polystable if and only if $X$ admits a cscK metric in $c_1(L)$. \end{conjecture}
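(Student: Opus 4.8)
The plan is to treat the two implications separately, since they demand entirely different techniques. For the direction asserting that the existence of a cscK metric in $c_1(L)$ forces K-polystability, I would work with the Mabuchi K-energy functional on the space of K\"ahler potentials in $c_1(L)$. A cscK metric is a critical point, indeed a minimiser, of this functional, and the K-energy is convex along geodesics in the space of potentials. Given a test configuration, the filtration picture of Section \ref{testconfigurations} lets one produce (after Phong--Ross--Sturm and Chen--Tian) a weak geodesic ray emanating from the cscK metric, and the asymptotic slope of the K-energy along this ray equals a positive multiple of the Donaldson--Futaki invariant $\DF(\scF)$ of the associated filtration. Convexity together with the minimising property then gives $\DF(\scF) \geq 0$, yielding K-semistability; the polystability refinement, namely that $\DF(\scF)=0$ can occur only for configurations coming from a holomorphic vector field in $\Aut(X,L)$, follows from the equality case of convexity combined with uniqueness of cscK metrics modulo automorphisms.

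For the converse --- that K-polystability implies the existence of a cscK metric --- I would run a continuity method, for instance along Chen's continuity path interpolating between a reference K\"ahler metric and the cscK equation, solving the equation at each parameter. Openness would come from the implicit function theorem, using invertibility of the linearised Lichnerowicz operator modulo holomorphic vector fields. Closedness is where the substance lies: one must bound the family of solutions uniformly, and where the estimates degenerate, extract a Gromov--Hausdorff limit of the metrics.

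The main obstacle, and the reason the general case remains open, is exactly this closedness and compactness analysis. Following the Fano model of Chen--Donaldson--Sun, a degenerating family should converge in the Gromov--Hausdorff sense to a limit which a partial $C^0$-estimate forces to be projective, embedded by sections of $L^k$ into a fixed projective space; the limit is then an algebraic degeneration of $(X,L)$, hence a test configuration. One shows this configuration has non-positive $\DF$, with equality only in the trivial case, so K-polystability both obstructs the degeneration --- giving the a priori estimate by contradiction --- and identifies the resulting limit as a cscK metric on $(X,L)$ itself. The two principal difficulties are: (i) the partial $C^0$-estimate for the cscK equation, where the lack of a Ricci lower bound removes the Cheeger--Colding structure theory underlying the Fano argument; and (ii) the fourth-order character of the cscK equation, which obstructs the higher-order and Moser-iteration estimates available for the second-order complex Monge--Amp\`ere equation in the Fano setting. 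These are precisely the reasons, as the authors note, that the Yau--Tian--Donaldson conjecture is currently proved only when $X$ is Fano with $L=-K_X$.
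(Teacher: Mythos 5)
This statement is Conjecture \ref{YTD}: the paper offers no proof of it, and indeed no proof exists in the stated generality, as the paper itself emphasises (the only settled case being $X$ Fano with $L=-K_X$, by \cite{RB,CDS,GT}). So there is nothing in the paper to match your argument against, and your own text concedes in its closing sentences that it is a research programme rather than a proof. What you have written is an accurate survey of the expected strategy, but it should not be mistaken for a demonstration: the forward direction, as you sketch it, at best recovers K-\emph{semi}stability, which is Donaldson's theorem \cite{SD} quoted in Remark \ref{orbifold}, and the converse is abandoned precisely at the closedness step of the continuity method.

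Beyond this global point, two specific steps would fail as stated. First, in the forward direction, the asymptotic slope of the Mabuchi energy along the weak geodesic ray attached to a test configuration is \emph{not} the Donaldson--Futaki invariant on the nose: it is a non-Archimedean functional bounded above by $\DF$, differing by a correction term coming from non-reducedness of the central fibre, and weak geodesics are only of regularity $C^{1,1}$ at best, which obstructs the naive use of the equality case of convexity. Consequently your ``polystability refinement'' --- that $\DF=0$ forces a product configuration --- does not follow from convexity plus uniqueness of cscK metrics; at the time of this paper even the implication from a cscK metric to K-polystability was known only when $\Aut(X,L)$ is discrete, by Stoppa \cite{JS}, and formulating the equality case correctly already requires the norm-zero characterisation of trivial configurations \cite{RD,BHJ}. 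Second, in the converse direction, the partial $C^0$-estimate and Gromov--Hausdorff compactness you invoke rest on Ricci lower bounds and Cheeger--Colding structure theory, available only in the Fano, anticanonically polarised setting; for the fourth-order cscK equation with general $L$ no substitute is known. You identify both obstacles yourself, which makes the write-up honest, but it confirms rather than closes the gap: the statement remains, as the paper labels it, a conjecture.
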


\begin{remark}\label{orbifold} By work of Donaldson, it is known that the existence of a cscK metric implies K-semistability \cite{SD}. Stoppa has strengthened this to K-polystability, provided the automorphism group of $(X,L)$ is discrete \cite{JS}. Therefore, the existence of a polynomial filtration with negative Donaldson-Futaki invariant implies that no cscK metric exists in $c_1(L)$. \end{remark}


While the filtration that we study in this paper naturally fits into the definition of a decreasing filtration, there is a another equivalent definition of filtrations which is more suitable to be translated into a geometric language. We will use this second notion to discuss the link between filtrations and test configurations, which are the more familiar object introduced by Donaldson \cite{SD3}.

\begin{definition}\cite{GS}\label{increasingfiltration} Denote by $R(X,L)$ the  co-ordinate ring of $(X,L)$. We define an admissible increasing filtration, or, for short, an \emph{increasing filtration}, $\scG$ of $R$ be sequence of vector subspaces 
$$\C=\scG_0R \subset \scG_1 R \subset \cdots$$
which is
\begin{itemize}
\item[(i)] \emph{pointwise right bounded}: for every $k$ there exists a $j=j(k)$ such that $\scG_j R_k = H^0(X,L^k)$,
\item[(ii)] \emph{multiplicative}: $(\scG_i R_l) (\scG_j R_m) \subset \scG_{i+j} R_{l+m}$, 
\item[(iii)] \emph{homogeneous}: $f\in \scG_iR$ then each homogeneous piece of $f$ is in $\scG_iR$.
\end{itemize} 
\end{definition}
Remark that the linear right bound in the decreasing case corresponds to the fact that the filtration starts at $0$ and $\scG_0=\C$. In this setup, the weight function is $$ w_{\scG}(k)= \sum_{i } (-i)(\dim \scG_iR_k  - \dim \scG_{i-1}R_k).$$


\begin{lemma} \label{swap}
There is a (non-canonical) way to pass from a decreasing polynomial filtration to an increasing polynomial filtration preserving both the Donaldson-Futaki invariant and the norm.
\end{lemma}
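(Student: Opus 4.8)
The plan is to build the increasing filtration by reflecting the indices of $\scF$, using its linear right bound to make the reflection point grow linearly in $k$. Fix a constant $C$ as in the linear right-boundedness axiom (i) of Definition \ref{decreasingfiltration}, so $\scF_{Ck}R_k=\{0\}$ for all $k$, and define
\[ \scG_i R_k := \scF_{Ck-i}R_k, \]
extending by $\scG_i R_0=\C$ on the constants (this boundary choice affects only lower-order terms). Because $\scF$ is decreasing, $\scF_{Ck-i}R_k$ grows with $i$, so $\scG$ is increasing, and $\scG_0R_k=\scF_{Ck}R_k=\{0\}$ for $k\geq 1$, matching the requirement $\scG_0R=\C$. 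The choice of $C$ is precisely the non-canonical input: every admissible linear bound yields a legitimate increasing filtration, and the content of the lemma is that the invariants do not see this choice.

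Next I would check the three axioms of Definition \ref{increasingfiltration}. Homogeneity is inherited from $\scF$, and pointwise right-boundedness follows from pointwise left-boundedness (ii) of $\scF$: if $\scF_{j(k)}R_k=R_k$, then $\scG_iR_k=R_k$ as soon as $Ck-i\leq j(k)$. The essential axiom is multiplicativity, and this is exactly where linear (rather than merely pointwise) right-boundedness is needed. Writing $\nu(f)=\max\{i:f\in\scF_iR_k\}$ for $f\in R_k$, multiplicativity of $\scF$ gives the superadditivity $\nu(fg)\geq\nu(f)+\nu(g)$; since the reflection point is exactly linear, $C(l+m)=Cl+Cm$, so $f\in\scG_iR_l$ and $g\in\scG_jR_m$ force $\nu(fg)\geq(Cl-i)+(Cm-j)=C(l+m)-(i+j)$, i.e.\ $fg\in\scG_{i+j}R_{l+m}$. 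Had the bound been only pointwise, the reflection point would fail to be additive across the grading and multiplicativity would break; so the linearity of the bound is the crux of the construction.

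Finally I would compare invariants. The reflection identifies the value of $f\in R_k$ for $\scG$ as $\mu(f)=Ck-\nu(f)$, and a direct count in Definition \ref{polynomials} yields
\[ w_{\scG}(k)=Ck\,h(k)-w(k)+h(k),\qquad d_{\scG}(k)=C^2k^2h(k)-2Ck\bigl(w(k)-h(k)\bigr)+d(k)-2w(k)+h(k). \]
Expanding against $h(k)=a_0k^n+a_1k^{n-1}+\cdots$, $w(k)=b_0k^{n+1}+\cdots$ and $d(k)=d_0k^{n+2}+\cdots$, one reads off the new leading coefficients and checks that the defining combinations of Definition \ref{donaldsonfutaki} return the invariants of $\scF$, with every $C$-dependent term cancelling. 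The norm is the clean case: one finds $b_0^{\scG}=Ca_0-b_0$ and $d_0^{\scG}=C^2a_0-2Cb_0+d_0$, and the cross terms cancel so that $d_0^{\scG}a_0-(b_0^{\scG})^2=d_0a_0-b_0^2$ exactly. The main obstacle is the analogous bookkeeping for $\DF$: the $k^n$-coefficient of $w_{\scG}$ must be tracked carefully, and the sign/normalization conventions under which Definition \ref{donaldsonfutaki} is transported to increasing filtrations must be pinned down so that the index reversal (which negates the top weight) is correctly absorbed and the stated equality of Donaldson-Futaki invariants holds. Once those conventions are fixed, the cancellation of the $C$-dependent and subleading terms is a routine expansion.
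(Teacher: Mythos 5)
Your construction is identical to the paper's: $\scG_iR_k:=\scF_{Ck-i}R_k$, with the linearity of the right bound used in exactly the same way to obtain multiplicativity, and your verification of the axioms of Definition \ref{increasingfiltration} is correct. Your treatment of the norm is also complete and correct: from $b_{0,\scG}=Ca_0-b_0$ and $d_{0,\scG}=C^2a_0-2Cb_0+d_0$ one gets $d_{0,\scG}a_0-b_{0,\scG}^2=d_0a_0-b_0^2$ on the nose; the norm is a variance, hence invariant under any affine reindexing $i\mapsto\pm i+Ck$.

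The gap is in the Donaldson--Futaki half, and it is not the routine bookkeeping you defer. Your own (correct) weight relation $w_{\scG}(k)=(Ck+1)h(k)-w_{\scF}(k)$ gives $b_{0,\scG}=Ca_0-b_0$ and $b_{1,\scG}=Ca_1+a_0-b_1$, hence
\[
b_{0,\scG}a_1-b_{1,\scG}a_0=-(b_0a_1-b_1a_0)-a_0^2,
\qquad\text{i.e.}\qquad
\DF(\scG)=-\DF(\scF)-a_0.
\]
So if the weight function of an increasing filtration is taken literally as displayed in the paper (coefficient $+i$ on $\dim\scG_iR_k-\dim\scG_{i-1}R_k$), the reflection does \emph{not} preserve $\DF$: it negates it, up to the $a_0$ coming from the off-by-one grading conventions. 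This is structural, not a matter of expansion: a reflection of weights preserves the variance (norm) but flips the sign of the linear functional $b_0a_1-b_1a_0$; only a \emph{shift} of weights preserves both. A concrete check: on $(\P^1,\O(1))$ with $\scF_i R_k$ spanned by monomials with $\deg x\geq i$ and $C=2$, one computes $\DF(\scF)=-1$ but $\DF(\scG)=0$, so the equality you left to ``conventions'' genuinely fails as stated.

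The missing idea --- which is what ``reversing the indices'' has to mean, and which is what the paper's proof implicitly uses --- is that the graded piece $\scG_iR_k/\scG_{i-1}R_k$ of an increasing filtration must be counted with weight $-i$, not $+i$. With that convention your map becomes a weight \emph{shift}: every section's weight changes by the constant $-Ck$, so that $w_{\scG}(k)=w_{\scF}(k)-Ckh(k)$ (this is the relation the paper asserts, up to sign typos in its coefficient formulas $b_{0,\scG}=b_{0,\scF}+Ca_0$, etc.), and then both $\DF$ and the norm are preserved by exactly the expansion you describe. In short: you reproduced the paper's construction and proved the norm statement, but the $\DF$ statement hinges on fixing the sign convention for increasing filtrations, and under the convention you (and the paper's displayed definition) actually use, it is false rather than unproven.
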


\begin{proof} Given a decreasing filtration $\scF$, we define $\scG_0=\C$ and $\scG_i(R_k)=\scF_{-i+Ck}(R_k)$, where $C$ is the constant appearing in the definition of decreasing filtration (remark that $C$ is not unique). It is easy to show that $\scG$ is point wise right bounded and homogenous; we now show that it is multiplicative. Indeed, 
\begin{align*}(\scG_{i_1} R_{k_1})( \scG_{i_2} R_{k_2} )&= (\scF_{Ck_1-i_1} R_{j_1})(\scF_{Ck_2-i_2} R_{j_2}), \\  &\subset \scF_{C(k_1+k_2)-i_1-i_2} R_{j_1+j_2}, \\ &= \scG_{i_1+i_2} R_{k_1+k_2}.\end{align*}
Remark that the linearity of the bound is key in this proof. To calculate the weight polynomials, note that we have added $Ck$ to the weight of each section of weight $i$. The weight polynomials are related by $$w_{\scG}(k) +Ckh(k)= w_{\scF}(k),$$ in particular $$b_{0,\scG} = b_{0,\scF} + Ca_0, b_{1,\scG} = b_{1,\scF} + Ca_1.$$
Similarly
$$d_{0,\scG}=d_{0,\scF}+2Cb_{0,\scF}+C^2a_0.$$
Computing the relevant Donaldson-Futaki invariants and norms we see that they are equal. For the reverse direction, it is enough to define $\scF_iR:=\scG_{-i}R$; all the verifications are straightforward, the constant $C$ can be taken equal to $0$.
\end{proof}

The procedure defined in the proof also gives an isomorphism between the Rees algebras of $\scF$ and $\scG$. This isomorphism does not preserve the grading, however finite generation is preserved.

\end{subsection}

\subsection{Approximating filtrations}\label{SectionApproximation} In this section, just to fix the notation, we assume that $\scF$ is an increasing filtration; however, in view of Lemma \ref{swap}, this does not really matter. The Donaldson-Futaki invariant of a non-finitely generated filtration $\scF$ is defined in \cite[Section 3.2]{GS} by choosing a specific approximation $\chi^{(k)}$ of $\scF$. It is not clear to us if this invariant depends on the choice of the approximation. A priori, for polynomial non-finitely generated filtrations, this definition does not coincide with ours. We avoid such issues by using the following result.

\begin{theorem}\label{ApproximationTheorem}
Let $\scF$ be a polynomial filtration. Then there exists a finitely generated filtration $\scG$ such that
$$ DF(\scF)=DF(\scG),$$
with the definition of the Donaldson-Futaki invariant as in Definition \ref{donaldsonfutaki}.
\end{theorem}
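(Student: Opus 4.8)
The plan is to reduce the Donaldson-Futaki invariant of a general polynomial filtration to that of a finitely generated one by approximating the filtered pieces in each fixed degree $k$ and then "gluing" these finite-dimensional approximations into a single finitely generated multiplicative filtration whose weight and trace-squared polynomials have the same leading two coefficients. The key observation is that $\DF(\scF)$ depends only on $a_0, a_1, b_0, b_1$, i.e. only on the top two coefficients of $h(k)$ and $w(k)$. So I do not need to reproduce $\scF$ exactly; I only need a finitely generated $\scG$ matching these four numbers.

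First I would make the standard observation, following \cite{GS}, that for each fixed $k$ the filtration $\scF$ restricts to a genuine filtration of the finite-dimensional vector space $R_k$, and hence determines an integer weight datum $\{\dim \scG_i R_k\}_i$. The natural candidate for a finitely generated approximant is obtained by truncating: fix a large integer $r$ and let $\scG$ be the filtration generated (as a multiplicative filtration) by the filtered pieces $\scF_\bullet R_k$ for $k \leq r$. By construction this $\scG$ has a finitely generated Rees algebra, since it is generated in bounded degree. The filtration $\scG$ agrees with $\scF$ in all degrees $k \leq r$ and is dominated by $\scF$ (in the appropriate sense) in higher degrees, because multiplying low-degree filtered pieces can only produce elements of weight no larger than $\scF$ assigns.

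The central step is then to show that $w_\scG(k)$ and $w_\scF(k)$, together with the two trace-squared polynomials, share leading terms as $r \to \infty$. Here I would invoke a Okounkov-body / concave-transform style estimate: the weight polynomial coefficients $b_0$ and $b_1$ are continuous functionals of the limiting measure associated to the filtration (the limit of the normalised weight distributions on $R_k$ as $k \to \infty$), and the truncated filtration $\scG$ has the same limiting measure as $\scF$ up to an error that vanishes as $r$ grows. Since $\DF$ is a rational expression in $(a_0,a_1,b_0,b_1)$ and these quantities converge, $\DF(\scG) \to \DF(\scF)$; choosing $r$ large enough and then, if necessary, perturbing to land exactly on the value (exploiting that $\DF$ is determined by finitely many rational data and that finitely generated filtrations are dense in the relevant sense) yields equality.

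\textbf{The hard part} will be upgrading convergence $\DF(\scG) \to \DF(\scF)$ to an exact equality $\DF(\scG) = \DF(\scF)$, as the statement demands, rather than mere approximation. A clean route is to show that the polynomial coefficients $a_1, b_0, b_1$ are insensitive to modifying the filtration in finitely many degrees and to controlling the high-degree behaviour, so that a single well-chosen truncation reproduces $a_0,a_1,b_0,b_1$ on the nose; this is where one must verify that the tail contribution of $\scF$ to the $k^{n+1}$ and $k^n$ coefficients of the weight is entirely captured by the finitely generated piece, using the polynomiality hypothesis on $\scF$ crucially. I expect the technical core to be an estimate bounding the difference $w_\scF(k) - w_\scG(k)$ by a polynomial of degree at most $n-1$, which would force agreement of $b_0$ and $b_1$ exactly, and an analogous degree bound for the trace-squared functions.
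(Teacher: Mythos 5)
Your first step—truncating, i.e.\ taking the finitely generated filtration generated by the pieces $\scF_\bullet R_p$ for $p \le r$, which agrees with $\scF$ in all degrees $\le r$—is exactly the paper's Lemma \ref{approximation}. The gap is in your central step. You claim that $b_0$ \emph{and} $b_1$ are continuous functionals of the limiting weight measure of the filtration; this is false for $b_1$. The limit of the rescaled weight distributions only sees leading-order data: for instance, shifting every weight by a fixed constant $c$ leaves the limit measure unchanged but replaces $w(k)$ by $w(k)+c\,h(k)$, hence $b_1$ by $b_1 + c\,a_0$. This is precisely the known difficulty with Donaldson--Futaki invariants of non-finitely-generated filtrations: it is why Sz\'ekelyhidi defines the invariant of a general filtration as a $\liminf$ over approximations, and why the authors of this paper remark that it is unclear whether that definition depends on the chosen approximation. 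So your measure-theoretic argument can at best give convergence of $b_0$ (and of the norm), not of $\DF$; and your proposed upgrade from convergence to exact equality (``perturbing to land exactly on the value'') is not an argument---the $\DF$ values of finitely generated filtrations do not form a family in which one can continuously adjust to hit a prescribed target.

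The paper's second step is different and uses no limits at all: since $\scF$ is polynomial, $w_\scF(k)$ \emph{is} a polynomial of degree $n+1$, hence is determined by its values at finitely many $k$; the truncation $\scF^{(r)}$ reproduces those values exactly for $k \le r$, so for $r$ large its weight polynomial coincides with that of $\scF$, giving $\DF(\scF^{(r)}) = \DF(\scF)$ on the nose (note also that $a_0,a_1$ never vary---they are invariants of $(X,L)$, not of the filtration). This is also the correct way to realise the ``clean route'' you sketch at the end: the degree bound you ask for on $w_\scF - w_\scG$ does not come from a tail estimate, but from exact agreement at sufficiently many points combined with polynomiality of $w_\scF$. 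As written, your proposal identifies the right construction and correctly flags the hard step, but the step that actually proves the theorem is left as an expectation, and the tool you invoke for it (continuity of $b_1$ in the limit measure) does not exist.
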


Theorem \ref{ApproximationTheorem} follows from the following two Lemmas.

\begin{lemma}\label{approximation}Let $\scF$ be an increasing filtration with Rees algebra $\scR$. For every integer $r$, there exists a finitely generated filtration $\scF^{(r)}$ of $R$ such that for all $p\leq r$ and all $i$ we have
$$\scF^{(r)}_iH^0(X,L^p) = \scF_i H^0(X,L^p).$$ \end{lemma}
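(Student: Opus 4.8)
The plan is to construct $\scF^{(r)}$ by \emph{truncating} the Rees algebra $\scR$ of $\scF$: I take the subalgebra generated by all the graded pieces lying in $R$-degree at most $r$, and then show that this subalgebra is finitely generated, defines a genuine filtration, and reproduces $\scF$ in low degrees. Recall that $\scR=\bigoplus_i (\scF_iR)t^i$ is bigraded by the filtration index $i$ and the degree $k$ of $R$; write $\scR_{i,k}=(\scF_iR_k)t^i$ for its bihomogeneous pieces. Since the increasing filtration starts at $\scF_0R=\C$, we have $\scF_iR_0=R_0=\C$ for every $i\geq 0$, so $\scR$ contains $\C[t]$. I would then let $\scR^{(r)}$ be the $\C[t]$-subalgebra of $\scR$ generated by $\bigoplus_{k\leq r,\,i}\scR_{i,k}$, i.e.\ by all bihomogeneous elements of $R$-degree at most $r$, and define $\scF^{(r)}_iR_k:=\{f\in R_k : ft^i\in\scR^{(r)}\}$. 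By construction the Rees algebra of $\scF^{(r)}$ is exactly $\scR^{(r)}$.

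The next step is to check the filtration axioms. Homogeneity is immediate because $\scR^{(r)}$ is bigraded. Monotonicity in $i$ and multiplicativity follow from $t\in\scR^{(r)}$ together with the fact that $\scR^{(r)}$ is an algebra. Since $\scR^{(r)}\subseteq\scR$, we get $\scF^{(r)}_0R_k\subseteq\scF_0R_k=0$ for $k\geq 1$, so $\scF^{(r)}$ also starts at $\C$. For pointwise right boundedness I would use that $R$ is generated in degree $1$ (we may assume $L$ projectively normal): as $1\leq r$, each degree-one generator $v$ satisfies $vt^{i(v)}\in\scR^{(r)}$ for some finite $i(v)$, so any product of $k$ such generators lies in $\scR^{(r)}$ after multiplication by a sufficiently high power of $t$, whence $\scF^{(r)}_iR_k=R_k$ for $i\gg 0$.

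It then remains to establish the two essential points. For finite generation, I would, for each $k\leq r$, fix a basis of $R_k$ adapted to the increasing flag $\scF_\bullet R_k$, and let $i_{k,j}$ be the minimal index at which the $j$-th basis vector $e_{k,j}$ appears. Then $\{t\}\cup\{e_{k,j}t^{i_{k,j}}\}$ is a finite generating set for $\scR^{(r)}$, because any $ft^i\in\scR_{i,k}$ with $k\leq r$ expands as a $\C$-linear combination of terms $e_{k,j}t^{i_{k,j}}\cdot t^{\,i-i_{k,j}}$ with $i\geq i_{k,j}$. Hence $\scR^{(r)}$ is a finitely generated $\C$-algebra, so $\scF^{(r)}$ is a finitely generated filtration. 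For agreement in low degrees, the inclusion $\scR^{(r)}\subseteq\scR$ gives $\scF^{(r)}_iR_p\subseteq\scF_iR_p$ for all $p\leq r$, while every $\scR_{i,p}$ with $p\leq r$ is among the chosen generators, so $\scR_{i,p}\subseteq\scR^{(r)}$ and therefore $\scF^{(r)}_iR_p=\scF_iR_p$, as required.

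The step I expect to be the main obstacle is the finite generation claim, since everything else is formal. This claim rests precisely on $R$ being a finitely generated $\C$-algebra and on each $R_k$ being finite-dimensional, so that each flag $\scF_\bullet R_k$ has only finitely many jumps and only finitely many bihomogeneous generators in $R$-degree at most $r$ are needed; the remaining verifications are the routine filtration axioms discussed above.
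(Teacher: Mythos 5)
Your proposal is correct and follows essentially the same route as the paper: both construct the truncated Rees algebra $\scR^{(r)}\subset R[t]$ generated over $\C[t]$ by the bihomogeneous pieces $(\scF_iR_p)t^i$ with $p\leq r$, and recover the filtration as $\scF^{(r)}_iR=\{s\in R: t^is\in\scR^{(r)}\}$. The only cosmetic difference is that the paper makes finite generation manifest by cutting the generating set off at $i\leq j(p)$, whereas you allow all $i$ and then verify finite generation via a basis adapted to the flag (and you supply the routine axiom checks the paper leaves implicit); the two subalgebras coincide.
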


\begin{proof}This is essentially contained in \cite[Section 3.1]{GS}. We construct the finitely generated filtration through its Rees algebra $\scR^{(r)}$, by defining $\scR^{(r)}\subset R[t]$ to be the $\C[t]$-subalgebra generated by 
$$\bigoplus^r_{p=1}\left(\bigoplus^{j(p)}_{i=1} (F_i R_p)t^i\right).$$
 Here $j(p)$ is the bound appearing in Definition \ref{increasingfiltration}. Since the filtration $\scF$ is multiplicative, this gives a well defined, finitely generated algebra. The corresponding filtration is 
$$\scF^{(r)}_i R = \{s\in R : t^i s \in \scR^{(r)}\}.$$
\end{proof}

\begin{remark} The geometric version of Lemma \ref{approximation} in terms of test configurations is \cite[Proposition 3.7]{RT}, which states that a test configuration is equivalent to an embedding of $X$ into projective space $\pr(H^0(X,L^k))$ for some $k$ and a choice of $\C^*$-action on this projective space. \end{remark}

\begin{lemma}\label{approximation2} Let $\scF$ be a polynomial filtration and $\scF^{(r)}$ be filtrations as in Lemma \ref{approximation}. Then, for $r$ sufficiently large, we have 
$$\DF(\scF^{(r)}) = \DF(\scF).$$ \end{lemma}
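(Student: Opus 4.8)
The plan is to show that the Donaldson--Futaki invariant, which is a quotient of coefficients of the Hilbert and weight polynomials, can be computed using only finitely many graded pieces, and that $\scF^{(r)}$ agrees with $\scF$ on those pieces for $r$ large. Concretely, I would first observe that by Lemma \ref{approximation} the filtrations $\scF^{(r)}$ and $\scF$ induce \emph{identical} filtered subspaces $\scF_i H^0(X,L^p)$ for every $p \leq r$ and every $i$. Consequently the weight function $w_{\scF^{(r)}}(k)$ and the Hilbert function $h_{\scF^{(r)}}(k)$ coincide with $w_{\scF}(k)$ and $h(k)$ for all $k \leq r$. The Hilbert function is intrinsic to $(X,L)$ and does not depend on the filtration, so $h_{\scF^{(r)}} = h$ as polynomials for all $r$; the issue is entirely with the weight polynomial.

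Since $\scF$ is polynomial by hypothesis, its weight function $w_{\scF}(k)$ is a genuine polynomial of degree $n+1$ for all $k$. As $\scF^{(r)}$ is finitely generated, it too is polynomial, so $w_{\scF^{(r)}}(k)$ is eventually a polynomial of degree $n+1$. The key step is then: two polynomials of degree at most $n+1$ that agree at the $n+2$ consecutive integer values $k = 1, \dots, n+2$ must be identical. Thus I would take $r \geq n+2$, so that $w_{\scF^{(r)}}(k) = w_{\scF}(k)$ for these $k$, and conclude that the two weight polynomials coincide. In particular the leading coefficients $b_0, b_1$ of $w_{\scF^{(r)}}$ equal those of $w_{\scF}$. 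Combined with the equality of the Hilbert polynomial coefficients $a_0, a_1$, Definition \ref{donaldsonfutaki} gives $\DF(\scF^{(r)}) = \DF(\scF)$ directly.

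The one point requiring care — and the main obstacle — is ensuring that $w_{\scF^{(r)}}$ is genuinely a polynomial on a range of small values of $k$ that overlaps with the range $p \leq r$ on which the filtrations agree. A finitely generated filtration is polynomial only for $k$ \emph{sufficiently large}, a priori beyond the window $k \leq r$ where we control $\scF^{(r)}$. To close this gap I would invoke the equivalence between finitely generated filtrations and test configurations (used in \cite{DWN,GS}): the weight polynomial of a test configuration is computed from an equivariant Hilbert polynomial on the central fibre, and one can make the threshold for polynomiality uniform by first fixing the test configuration and then choosing $r$ large relative to that threshold. More directly, since both polynomials have degree at most $n+1$, agreement at \emph{any} $n+2$ values in the asymptotic range where both are known to be polynomial suffices; I would therefore argue that for $r$ large the stabilisation range of $\scF^{(r)}$ and the agreement range $p \leq r$ necessarily overlap in at least $n+2$ integers, forcing equality of the two polynomials and hence of their coefficients $b_0$ and $b_1$.
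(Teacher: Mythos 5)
Your first two paragraphs are, in substance, exactly the paper's proof of Lemma~\ref{approximation2}: by Lemma~\ref{approximation} the filtrations agree in all degrees $p\leq r$, the Hilbert polynomial is intrinsic to $(X,L)$, and since $w_{\scF}$ is a polynomial of degree $n+1$ it is determined by finitely many values, so for $r$ large the two weight polynomials — hence $b_0$, $b_1$, hence $\DF$ — must coincide. So the core of your argument matches the paper's route.

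Where you diverge is the third paragraph, and your instinct there is good: the obstacle you flag is genuine. For the finite-determination argument to apply, $w_{\scF^{(r)}}(k)$ must actually equal its asymptotic degree-$(n+1)$ polynomial at $n+2$ of the integers $k\leq r$ where it agrees with $w_{\scF}$; a finitely generated filtration is only guaranteed to have polynomial weight function for $k$ \emph{sufficiently large}, with a threshold $k_0(r)$ depending on $\scF^{(r)}$. The paper's proof is silent on exactly this point — it simply asserts that $\scF^{(r)}$ "has the same weight polynomial as $\scF$", implicitly assuming the values on $[1,r]$ pin down the asymptotic polynomial of $\scF^{(r)}$. Your attempted patch, however, does not close the gap: "first fixing the test configuration and then choosing $r$ large relative to that threshold" is circular, since the test configuration in question is the one built from $\scF^{(r)}$ itself and changes (together with its threshold $k_0(r)$) every time you enlarge $r$; and the concluding claim that the stabilisation range and the agreement range "necessarily overlap in at least $n+2$ integers" is asserted, not proven — nothing you say rules out $k_0(r)$ growing faster than $r$. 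So your write-up is more candid than the paper's but no more complete: it reproduces the paper's argument and honestly exposes, without resolving, the one step that the paper's own proof leaves unjustified.
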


\begin{proof} Since $\scF$ is polynomial, its weight function is a polynomial of degree $n+1$.  Because of this, the weight polynomial is determined by finitely many values of $w(k)$. Take an approximating sequence as in Lemma \ref{approximation}, with $r$ sufficiently big such that the weight polynomial is determined by the weight values for $p<r$. Then $\scF^{(r)}$ has the same weight polynomial as $\scF$. Therefore $\DF(\scF^{(r)}) = \DF(\scF),$ as required. \end{proof}

To prove Theorem \ref{ApproximationTheorem} just remark that we can take as $\scG$ the filtration $\scF^{(r)}$ constructed in Lemma \ref{approximation} for $r$ sufficiently large and then apply Lemma \ref{approximation2}. Theorem \ref{ApproximationTheorem} implies the following.

\begin{corollary}
The following are equivalent:
\begin{itemize}
\item[(i)] For any filtration of the co-ordinate ring, the Donaldson-Futaki invariant is non-negative;
\item[(ii)] For any polynomial filtration of the co-ordinate ring, the Donaldson-Futaki invariant is non-negative;
\item[(iii)] For any finitely generated filtration of the co-ordinate ring, the Donaldson-Futaki invariant is non-negative.
\end{itemize}
\end{corollary}


This Corollary means that the definition of $K$-semistability \ref{Ksemistable} is equivalent to the usual definition. In particular, we can use Definition \ref{donaldsonfutaki} for the Donaldson-Futaki invariant of a polynomial filtration which is not finitely generated.

\subsection{Test configurations and filtrations}\label{testconfigurations}We now turn to the more geometric notion of test configurations, and recall how they relate to filtrations. 

\begin{definition} A \emph{test configuration} for $(X,L)$ is a polarised variety $(\scX,\scL)$ together with
\begin{itemize} 
\item[(i)] a proper flat morphism $\pi: \scX \to \C$,
\item[(ii)] a $\C^*$-action on $\scX$ covering the natural action on $\C$,
\item[(iii)] and an equivariant relatively ample line bundle $\scL$ on $\scX$
\end{itemize}
such that the fibre $(\scX_t,\scL_t)$ over $t$ is isomorphic to $(X,L^r)$ for one, and hence all, $t \in \C^*$ and for some $r>0$. 
\end{definition}

\begin{remark}One should think of test configuration as geometrisations of the one-parameter subgroups that are considered when applying the Hilbert-Mumford criterion to GIT stability on Hilbert schemes. \end{remark}

As the $\C^*$-action on $(\scX,\scL)$ fixes the central fibre $(\scX_0,\scL_0)$, there is an induced action on $H^0(\scX_0,\scL^k_0)$ for all $k$. Denote by $A_k$ the infinitesimal generator of this action, so that $\C^*$ acts as $t\to t^{A_k}$ on $H^0(\scX_0,\scL^k_0)$. The total weight $\tr(A_k)$ of the $\C^*$-action on $H^0(\scX_0,\scL^k_0)$ is a polynomial of degree $k+1$, expanding the Hilbert and weight polynomials as in Definition \ref{donaldsonfutaki} we can define the \emph{Donaldson-Futaki} invariant of a test configuration, just as we did for polynomial filtrations. Similarly, using  $\tr(A_k^2)$, one can define the \emph{norm} of a test configuration.

\begin{remark} There is a geometric interpretation of test configurations with zero norm: a test configuration $(\scX,\scL)$ has norm zero if and only if it has normalisation equivariantly isomorphic to the product configuration $X\times\C$ with the trivial action on $X$ \cite{RD,BHJ}.\end{remark}

One classical source of test configurations are those arising from automorphisms.

\begin{definition} Given a one parameter subgroups of $\Aut(X,L)$, define the corresponding \emph{product test configuration} by $(\scX,\scL)=(X\times\C,L)$ with the action on the central fibre over $0\in\C$ induced by the automorphism. \end{definition}

Given such an automorphism, Futaki showed that the corresponding Donaldson-Futaki invariant must vanish if $X$ admits a cscK metric in $c_1(L)$; see \cite{SD3} for a discussion of this point.

The relationship between test configurations and filtrations is as follows.

\begin{theorem}\cite{GS,DWN} Given an arbitrary test configuration, there exists a finitely generated filtration with the same Donaldson-Futaki invariant. Conversely, given any finitely generated filtration, one can construct a test configuration with the same Donaldson-Futaki invariant. \end{theorem}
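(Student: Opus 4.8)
The plan is to prove both directions of the equivalence between finitely generated filtrations and test configurations, with the Donaldson-Futaki invariants matching, by passing through the Rees algebra construction and invoking the work of Sz\'ekelyhidi and Witt Nystr\"om cited in the statement. Since the theorem explicitly attributes the result to \cite{GS,DWN}, my approach will be to recall the correspondence concretely rather than to reprove it from scratch; the content is really a matter of carefully matching up the algebraic and geometric weight data.

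First I would treat the direction from test configurations to filtrations. Given a test configuration $(\scX,\scL)$ with its $\C^*$-action, the key observation is that the $\C^*$-action on $H^0(\scX_0,\scL_0^k)$ induces a weight decomposition, and the flatness of $\pi$ together with equivariance lets one transport these weight spaces back to $R_k = H^0(X,L^k)$. Concretely, the Rees algebra of $(\scX,\scL)$ over $\C$ is a flat $\C[t]$-algebra whose generic fibre recovers $R(X,L)$; declaring $\scF_i R_k$ to be the span of sections whose weight is at least $i$ (after the usual normalisation) produces a filtration that is multiplicative and homogeneous because the $\C^*$-action respects the ring structure and the grading. Finite generation of this filtration follows from finite generation of $\scL$ and the coherence of the push-forward sheaves. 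The identification of the Donaldson-Futaki invariants is then immediate from Definition \ref{donaldsonfutaki}, since the weight function $w(k)$ of the filtration is precisely $\tr(A_k)$ up to the same leading-order normalisation, and both $a_0,a_1$ and $b_0,b_1$ are read off from identical Hilbert and weight polynomials.

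For the converse, given a finitely generated filtration $\scF$, I would form its Rees algebra $\Rees(\scF) = \oplus_i (\scF_i R)t^i \subset R[t]$ and set $\scX = \Proj$ of this algebra over $\C = \Spec \C[t]$, with $\scL$ the relatively ample line bundle induced by the grading in $k$. Finite generation guarantees that $\scX$ is a genuine scheme of finite type and that $\pi:\scX\to\C$ is proper; the $\C^*$-action comes from the $t$-grading, covering the standard action on $\C$. One checks that the generic fibre is $(X,L^r)$ for suitable $r$ and that the central fibre carries the induced action whose weights recover the graded pieces $\gr(\scF)$, so that again $\tr(A_k) = w(k)$ to the relevant order and the Donaldson-Futaki invariants coincide. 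The normality required in the definition of a test configuration can be arranged by passing to the normalisation, which does not change the Donaldson-Futaki invariant at the orders of interest.

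The main obstacle I expect is not in either construction individually but in verifying that the central fibre $(\scX_0,\scL_0)$ is actually \emph{flat} and that $\scL$ is genuinely \emph{relatively ample} rather than merely relatively nef; this is where finite generation of the Rees algebra is indispensable, and where the careful bookkeeping of the bounds in Definition \ref{decreasingfiltration} enters. A secondary subtlety is reconciling the two indexing conventions, since the Rees algebra construction is most natural for increasing filtrations while our filtration is decreasing; here I would invoke Lemma \ref{swap} to pass between the two conventions, noting that it preserves both the Donaldson-Futaki invariant and finite generation, so no information about the weight polynomials is lost in the translation. Beyond these points the argument is a direct unwinding of definitions, and I would refer the reader to \cite{GS,DWN} for the remaining routine verifications.
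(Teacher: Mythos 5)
Your proposal follows essentially the same route as the paper's proof: one direction by viewing sections of $R_k=H^0(X,L^k)$ as meromorphic sections on the total space and filtering by the power of $t$ needed to make them regular (equivalently, transporting the weight decomposition of $H^0(\scX,\scL^k)$ back to $R_k$, as you phrase it), and the converse by taking $\Proj_{\C[t]}$ of the Rees algebra with the $\C^*$-action coming from its grading, deferring the remaining verifications to \cite{GS,DWN} exactly as the paper does. The only claim to treat with care is your assertion that normalising $\Proj(\Rees(\scF))$ does not change the Donaldson--Futaki invariant --- in general normalisation can strictly decrease it --- but this subtlety concerning the normality required by the paper's definition of test configuration is equally untouched by the paper's own sketch.
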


\begin{proof} We recall the strategy of the proof.  Let $(\scX,\scL)$ be a test configuration, and let $s\in H^0(X,L^k)$. We think of $s$ as a section of $\scX_t$ for all $t \neq 0$ using the $\C^*$-action on $\scX$. In particular, $s$ can be thought of as a section defined when $t\neq 0$, so is a meromorphic section, with a pole of some order along $t=0$. Therefore $t^is$ is a holomorphic section for $i\in \N$ sufficiently large. We then define a filtration by saying that $s$ belongs to $\scF_i$ if $t^is$ is regular on all of $\scX$. 

Conversely, given a finitely generated filtration $\scF$, the associated test configuration is $\Proj_{\C[t]}(\Rees (\scF))$ with its natural $\scO(1)$ polarisation. The $\C^*$-action is given by the grading of the Rees algebra. \end{proof}

This theorem implies that finitely generated filtration are polynomials. With all of this in place, we can define K-polystability. 

\begin{definition}\cite[Section 8.2]{LX}\cite{JS2} We say that a polarised variety is \emph{K-polystable} if for all test configurations $(\scX,\scL)$, the corresponding Donaldson-Futaki invariant satisfies $\DF(\scX,\scL) \geq 0$, with equality if and only if $(\scX,\scL)$ is isomorphic to a product test configuration away from a closed subscheme of codimension two. Otherwise we say $(\scX,\scL)$ \emph{destabilises} $(X,L)$. \end{definition}

\begin{remark} The definition of K-polystability is independent of scaling $L\to L^r$. In particular, it makes sense for pairs $(X,L)$ where $X$ is a variety and $L$ is a $\Q$-line bundle. From another point of view, there is no loss in assuming that $L$ is very ample and projectively normal.\end{remark}

\section{The Loewy filtration}\label{loewy}

In this section we define the Loewy filtration and prove Theorem \ref{intro_main}. The Loewy filtration is a canonical decreasing filtration of the co-ordinate ring of any polarised variety $(X,L)$. The construction uses the automorphism group $\Aut(X,L)$ of $(X,L)$; the filtration is non-trivial if and only if $\Aut(X,L)$ is non-reductive. This filtration satisfies the hypotheses of Definition \ref{decreasingfiltration}: it is homogeneous and point-wise left bounded by construction, multiplicative because of Lemma \ref{mult} and linearly right bounded by Lemma \ref{lin_bound}. We assume that $L$ is very ample and projectively normal; in particular $\Aut(X,L)$ is a closed sub-group of $GL (H^0(X,L^k))$ for every $k$.

\begin{subsection}{The Loewy filtration of a module}

Let $G$ be a linear algebraic group defined over $\C$ and $V$ be a $G$-module. We are interested in the cases where $G$ is either $\Aut(X,L)$ or its unipotent radical and $V=H^0(X,L^k)$; however, our definition makes sense in a more general context. We define the filtration inductively.

\begin{definition}
Let $V$ be a finite dimensional $G$-module. The \emph{Loewy filtration} $\scF_{\bullet}V$ is defined as
\begin{itemize}
\item[(i)] $\scF_0V=V$;
\item[(ii)] for $i>0$, $\scF_iV$ is the minimal $G$-submodule of $\scF_{i-1}V$ such that the quotient $\scF_{i-1}V/\scF_iV$ is semi-simple.
\end{itemize}
\end{definition}

Equivalently, we can define $\scF_iV$ to be the intersection of all maximal non-trivial submodules of $\scF_{i-1}V$. The Loewy filtration is sometimes called radical filtration or Loewy decreasing filtration. See \cite[Section V.1]{ASS}  or \cite[Section I.2.14 and Chapter D]{JJ} for a general discussion. In what follows, we will give another description of the Loewy filtration. Before that, let us point out the following important consequence of the definition.

\begin{proposition} The Loewy filtration is $G$-equivariant; namely, each $F_iV$ is a $G$-submodule of $V$.

\end{proposition}

We wish to compute the weight of $\scF_{\bullet}$, as defined in Definition \ref{polynomials}. In order to do this more easily, we introduce another description of the Loewy filtration.
\begin{lemma}\label{unipotent_reduction}
Let $U$ be the unipotent radical of $G$ and $V$ be a $G$-module. Then the Loewy filtration of $V$ as a $G$-module is equal to the Loewy filtration of $V$ as a $U$-module. 
\end{lemma}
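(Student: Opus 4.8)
The plan is to prove Lemma \ref{unipotent_reduction} by showing that the semi-simplicity of a quotient $W/W'$ (where $W$ is a $G$-submodule and $W'$ a submodule) is detected at the level of the unipotent radical $U$. Since the Loewy filtration at each step is characterised as the minimal submodule yielding a semi-simple quotient, it suffices to show that the relevant notions of ``minimal submodule giving a semi-simple quotient'' coincide for the $G$-module structure and the $U$-module structure on each $\scF_{i-1}V$. First I would reduce the statement to a single inductive step: assuming $\scF^G_{i-1}V = \scF^U_{i-1}V =: W$, I would show $\scF^G_i W = \scF^U_i W$, where the superscripts indicate which group structure defines the radical. By the equivalent description in the excerpt, $\scF_i W$ is the radical of $W$, i.e.\ the intersection of all maximal proper submodules; equivalently it is the smallest submodule with semi-simple quotient.

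The key structural input I would use is the theory of reductive groups acting on modules: write $G = R \ltimes U$ where $R$ is a reductive Levi factor and $U$ the unipotent radical. The central fact I would invoke is that a $G$-module is semi-simple as a $G$-module if and only if it is semi-simple as a $U$-module, and moreover that $U$ acts trivially on any semi-simple $U$-module in characteristic zero (since $U$ is unipotent, its only irreducible module is the trivial one, so a semi-simple $U$-module is a direct sum of trivials, i.e.\ $U$ acts trivially on it). This means a quotient $W/W'$ is semi-simple as a $U$-module precisely when $U$ acts trivially on it, which happens precisely when $W' \supseteq \langle u\cdot w - w : u \in U, w \in W\rangle$, the submodule generated by the ``augmentation'' action of $U$. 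The crucial point is that this $U$-generated submodule is automatically $G$-stable, because $R$ normalises $U$ and so conjugation by $R$ permutes the relations $u\cdot w - w$; this is where the semidirect product structure is essential. Hence the smallest $U$-submodule with semi-simple (equivalently trivial) $U$-quotient is already a $G$-submodule.

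With this in hand, the two radicals coincide as follows. The radical of $W$ as a $U$-module is $\scF^U_1 W = \sum_{u \in U}(u - 1)W$, which I have just argued is a $G$-submodule with semi-simple $U$-quotient, hence with semi-simple $G$-quotient (a $U$-semi-simple $G$-module is $G$-semi-simple, since $R$ is reductive and splits the $U$-trivial module completely). Therefore $\scF^G_1 W \subseteq \scF^U_1 W$. Conversely, any $G$-submodule $W'$ with $W/W'$ semi-simple as a $G$-module is in particular $U$-semi-simple, so $W' \supseteq \scF^U_1 W$, giving the reverse inclusion $\scF^G_1 W \supseteq \scF^U_1 W$. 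Combining the inclusions yields $\scF^G_1 W = \scF^U_1 W$, and induction completes the proof.

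The main obstacle I anticipate is establishing rigorously the equivalence ``$G$-semi-simple $\iff$ $U$-semi-simple'' for the relevant quotients, together with the clean identification of the $U$-radical with the $U$-augmentation submodule $\sum_{u}(u-1)W$ and its $G$-invariance. The forward direction (a $U$-semi-simple $G$-module being $G$-semi-simple) relies genuinely on $R$ being reductive and on complete reducibility of $R$-modules in characteristic zero, applied after identifying the $U$-socle; I would handle this by combining the $R$-action on the $U$-fixed subspace $W^U$ with the reductivity of $R$. I expect the reference to \cite[Chapter D]{JJ} to supply the precise statement that for an algebraic group over $\C$, the socle and radical series are governed by the unipotent radical, and I would cite it to shortcut the more delicate verifications rather than reproving the structure theory from scratch.
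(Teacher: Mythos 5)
Your proof is correct and takes essentially the same route as the paper's: the Levi decomposition $G = R \ltimes U$, the fact that a $U$-module is semi-simple if and only if $U$ acts trivially, normality of $U$ in $G$ to obtain $G$-stability of the relevant $U$-submodule, and reductivity of $R$ to upgrade $U$-semi-simplicity of the quotient to $G$-semi-simplicity. Your write-up is simply a more detailed version of the paper's four-sentence sketch — in particular the explicit identification of the $U$-radical with the augmentation submodule $\sum_{u \in U}(u-1)W$ and the two-inclusion argument, where the paper instead phrases the same ingredients in terms of the maximal trivial $U$-submodule.
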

\begin{proof}
The unipotent radical of $G$ is its maximal normal connected unipotent subgroup; it is trivial if and only if $G$ is reductive. In characteristic zero, we have the Levi decomposition
$$
G=R\ltimes U
$$
where $R$ is a reductive group. A $U$-module is semi-simple if and only if the action of $U$ is trivial. Because of the normality of the radical, a maximal trivial $U$-submodule is also a $G$ submodule. Moreover it is a semi-simple $G$-submodule, because $U$ acts trivially and $R$ is reductive.
\end{proof}
Lemma \ref{unipotent_reduction} simplifies the study of the Loewy filtration because a representation of a unipotent group is semi-simple if and only if the action of the group is trivial.

\end{subsection}
\begin{subsection}{The Loewy filtration of the co-ordinate ring}

We now consider the Loewy filtration of the co-ordinate ring. Let $(X,L)$ be a polarised variety and let
$$
R=\bigoplus_{k\geq 0}R_k=\bigoplus_{k\geq 0}H^0(X,L^{\otimes k})
$$
Each module $R_k$ has a Loewy filtration $\scF_{\bullet}$; we define
$$ \scF_iR:=\bigoplus_k\scF_iR_k.$$

\begin{lemma}\label{mult} The Loewy filtration is multiplicative. \end{lemma}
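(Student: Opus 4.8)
The plan is to reduce to the unipotent radical and then exploit the fact that $G$ acts by ring automorphisms, so that its Lie algebra acts by derivations. First I would invoke Lemma~\ref{unipotent_reduction} to replace $G$ by its unipotent radical $U$, with Lie algebra $\mathfrak{u}$. Since for a unipotent group semisimplicity of a module means triviality of the action, the radical of any $U$-module $V$ is exactly $\mathfrak{u}V$, and iterating gives the clean description $\scF_iV=\mathfrak{u}^iV=J^iV$, where $J\subset U(\mathfrak{u})$ is the augmentation ideal of the universal enveloping algebra. In particular $\scF_iR_k$ is spanned by elements $\xi_1\cdots\xi_i\cdot s$ with $\xi_p\in\mathfrak{u}$ and $s\in R_k$, and one has the useful identity $\mathfrak{u}\,\scF_iR=\scF_{i+1}R$.

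The second ingredient is that, because $G$ preserves the multiplication of $R$, each $\xi\in\mathfrak{u}$ acts as a degree-preserving derivation: differentiating $g(fh)=(gf)(gh)$ at the identity gives $\xi(fh)=(\xi f)h+f(\xi h)$. With $\scF_iR=J^iR$ in hand, the statement becomes $(\mathfrak{u}^iR_l)(\mathfrak{u}^jR_m)\subseteq\mathfrak{u}^{i+j}R_{l+m}$, and I would attack it by induction on $i$ (uniformly in $j$) using Leibniz. Writing a general element of $\scF_iR_l$ as $a=\sum_k\xi_k a_k$ with $a_k\in\scF_{i-1}R_l$, and taking $b\in\scF_jR_m$, one has
$$ab=\sum_k\xi_k(a_kb)-\sum_k a_k(\xi_k b).$$
Here $\xi_k(a_kb)$ lies in $\mathfrak{u}\,\scF_{i-1+j}R_{l+m}=\scF_{i+j}R_{l+m}$ as soon as $a_kb\in\scF_{i-1+j}R_{l+m}$, while $a_k(\xi_k b)$ is a product of an element of $\scF_{i-1}R_l$ with $\xi_k b\in\scF_{j+1}R_m$; both terms are controlled by the inductive hypothesis at first index $i-1$. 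This reduces everything to the base case $i=0$, namely that each $\scF_jR=\bigoplus_k\mathfrak{u}^jR_k$ is stable under multiplication by all of $R$, i.e.\ is an ideal.

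I expect this base case to be the main obstacle, and it is not formal: the same Leibniz computation applied to $c\cdot b$ with $c\in R_l$ and $b\in\scF_jR_m$ produces a cross term $(\xi_k c)b_k$ lying in $\scF_1R_l\cdot\scF_{j-1}R_m$, whose placement in $\scF_jR_{l+m}$ is exactly an instance of the multiplicativity one is trying to prove — so a purely inductive bootstrap is circular. Breaking the circularity seems to require genuine input beyond the derivation property, and here I would use that $\mathfrak{u}$ is the Lie algebra of the \emph{unipotent radical}: the pieces $\scF_jR_k$ are honest $G$-submodules and the multiplication $R_l\otimes\scF_jR_m\to R_{l+m}$ is $G$-equivariant, so I would pass to the maximal quotient $R_{l+m}/\scF_jR_{l+m}$ on which $J^j$ acts trivially and show, using $G$-equivariance together with the normality of $U$ in $G$, that the image of $R_l\otimes\scF_jR_m$ vanishes there. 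An alternative I would keep in reserve is to dualise and argue with the socle filtration $\scS_\bullet$, whose interaction with multiplication is more transparent (products of invariants are invariant, dually a subcoalgebra-type condition for the comultiplication on $R^\vee$), transporting the conclusion back through the duality $\gr_i(\scS_\bullet V^\vee)\cong\gr_i(\scF_\bullet V)^\vee$ established above.
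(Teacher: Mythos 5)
Your setup coincides with the paper's own route: the paper also reduces to the unipotent radical, identifies $\scF_iR_k=J^iR_k$ with $J$ the (image of the) augmentation/Jacobson radical of $\mathfrak{U}(\mathfrak{u})$ acting on $R_k$, and then concludes with the single-line containment $J^iR_l\cdot J^jR_s\subset J^{i+j}R_{l+s}$. Where you differ is that you correctly recognise that this containment is \emph{not} formal: Leibniz only controls $\xi(ab)$, never $(\xi a)b$ or $a(\xi b)$ separately, your induction reduces everything to the base case that each $\scF_jR$ is an ideal of $R$, and bootstrapping that base case is circular. This diagnosis is sharp, and in fact it applies verbatim to the paper's own proof, which asserts exactly this step without justification.

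However, your proposal stops short of a proof, and the repair you sketch (pass to $R_{l+m}/\scF_jR_{l+m}$ and use $G$-equivariance plus normality of $U$ in $G$) provably cannot suffice as stated, because the statement it would establish -- multiplicativity of the radical filtration for a graded algebra acted on by a group with normal unipotent radical and reductive Levi -- is false. Take $R=\C[e_1,e_2,e_3]$, the co-ordinate ring of $(\P^2,\O(1))$, let $\xi=e_1\partial_{e_2}+e_2\partial_{e_3}$, $U=\exp(\C\xi)$, and $G=\C^*\ltimes U$ with $\C^*=\{\mathrm{diag}(t,1,t^{-1})\}$ normalising $U$ (so $U$ is the unipotent radical of $G$ and $G/U$ is a torus). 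By Lemma \ref{unipotent_reduction} the Loewy filtration of $R_k$ as a $G$-module is $\mathfrak{u}^iR_k=\xi^iR_k$. Then $e_2=\xi(e_3)\in\scF_1R_1$, while
$$\scF_1R_2=\xi(R_2)=\operatorname{span}\{e_1^2,\ e_1e_2,\ e_1e_3+e_2^2,\ e_2e_3\},$$
which does not contain $e_2^2$; hence $R_1\cdot\scF_1R_1\not\subset\scF_1R_2$ and multiplicativity fails. This does not contradict the Lemma, since this $U$ is not the unipotent radical of the full automorphism group $GL_3$; but it shows that any correct proof must use the maximality of $U$ as the unipotent radical of all of $\Aut(X,L)$, an input that appears neither in your sketch nor in the tools you invoke (equivariant multiplication, normality, Levi decomposition -- all of which hold in the example above). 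Your fallback via duality does not escape this either: co-multiplicativity of the socle filtration on $R^{\vee}$ is equivalent to the original claim under $\scS_iR_k^{\vee}=\Ann\scF_iR_k$, not an independent lever. So the gap is genuine: the base case you isolate is the entire content of the Lemma and remains unproven.
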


\begin{proof}
We use a further description of the Loewy filtration. Let $\mathfrak{u}$ be the Lie algebra of $U$ and $A$ be the universal enveloping algebra $\mathfrak{U}(\mathfrak{u})$ of $\mathfrak{u}$. The advantage of this point of view is that $A$ is an associative (non-commutative) algebra. We can thus consider its Jacobson radical $J(A)$, see for example \cite[Section I.3]{ASS}. It can be defined as the intersection of all maximal left ideals and one can show that it is a two-sided ideal. The relationship between the Loewy filtration and the Jacobson radical is 
$$ \scF_iR_k=J(A)^iR_k,$$
this follows from \cite[Proposition I.3.7 and Corollary I.3.8]{ASS}. From this we can show that the Loewy filtration is multiplicative. Indeed,

\begin{align*}\scF_iR_l\cdot \scF_jR_s&=J(A)^iR_l\cdot J(A)^jR_s, \\ &\subset J(A)^{i+j}R_{l+s}, \\ &=\scF_{i+j}R_{l+s}.\end{align*}
\end{proof}
\begin{definition}
We define the \emph{length} of a filtration to be the maximum $i$ such that $F_iV$ is not trivial. We then define the \emph{Loewy length} $\ll(V)$ of a module $V$ is the length of the Loewy filtration. The Loewy length of a graded ring 
$$R=\bigoplus_k R_k $$
is a function of $k$, and we denote by $\ll_R(k)$ the Loewy length of $R_k$. The Loewy length of a polarised variety $(X,L)$ is the length of its co-ordinate ring, seen as an $\Aut(X,L)$-module.
\end{definition}
General discussions about the Loewy length of a fixed module can be found in \cite[Chapter V]{ASS} and \cite[Chapter D]{JJ}.

\begin{lemma}\label{lin_bound}
The Loewy filtration is linearly right bounded. That is, there exists a constant $C$ such that
$$\ll(k)\leq Ck.$$
We can take $C=h^0(X,L)$.
\end{lemma}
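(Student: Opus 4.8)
The plan is to reduce to the unipotent radical and then to show that in the unipotent case one application of the radical is the same as one application of the Lie algebra, after which the bound becomes a Leibniz-rule count. By Lemma~\ref{unipotent_reduction} I may replace $G$ by its unipotent radical $U$ and compute the Loewy filtration as a $U$-module. The first step is the elementary identity, valid for any finite-dimensional $U$-module $V$,
$$\scF_1 V=\mathfrak{u}V:=\Span\{X\cdot v:X\in\mathfrak{u},\ v\in V\},$$
where $\mathfrak{u}=\mathrm{Lie}(U)$. Indeed, since $U$ is unipotent a module is semi-simple exactly when $U$, equivalently $\mathfrak{u}$, acts trivially; hence $\scF_1V$ is the smallest submodule with trivial quotient, which is precisely $\mathfrak{u}V$. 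That $\mathfrak{u}V$ is a $U$-submodule follows from $u\cdot(X\cdot v)=(\mathrm{Ad}(u)X)\cdot(u\cdot v)$ together with the stability of $\mathfrak{u}$ under the adjoint action. Iterating, $\scF_iV=\mathfrak{u}^iV$, the span of the $X_1\cdots X_i\cdot v$; this is the explicit form on each $R_k$ of the Jacobson-radical description already used in Lemma~\ref{mult}.

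The second, and main, step uses that $U$ acts on $R=\bigoplus_k R_k$ by algebra automorphisms, so that $\mathfrak{u}$ acts by \emph{derivations}, $X\cdot(st)=(X\cdot s)\,t+s\,(X\cdot t)$. Since $L$ is very ample and projectively normal, $R$ is generated in degree one, so every element of $R_k$ is a sum of products $s_1\cdots s_k$ with $s_j\in R_1$. Applying a product of $i$ elements of $\mathfrak{u}$ and expanding by the Leibniz rule distributes the $i$ derivations over the $k$ factors: each resulting term is a product $\prod_{m=1}^k w_m$ with $w_m\in\mathfrak{u}^{a_m}R_1$ and $\sum_{m=1}^k a_m=i$. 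Set $\ell:=\ll(1)$, the Loewy length of $R_1$. If $i>\ell k$ then in every such term some $a_m>\ell$, whence $w_m\in\mathfrak{u}^{a_m}R_1=\scF_{a_m}R_1=0$ and the term vanishes. Therefore $\scF_iR_k=\mathfrak{u}^iR_k=0$ for $i>\ell k$, giving $\ll(k)\leq \ll(1)\,k$.

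It remains to bound $\ll(1)$. For a finite-dimensional module over a finite-dimensional algebra the radical is proper on nonzero modules (Nakayama), so the Loewy filtration of $R_1$ is strictly decreasing until it reaches zero; thus $\dim R_1\geq \ll(1)+1$ and $\ll(1)\leq h^0(L)-1$. Combining, $\ll(k)\leq(h^0(L)-1)k\leq h^0(L)\,k$, so the constant $C=h^0(L)$ works.

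The one point demanding care is the derivation expansion when the chosen elements $X_1,\dots,X_i\in\mathfrak{u}$ do not commute: the operators landing on a given factor inherit a definite relative order, so each $w_m$ is a specific element of $\mathfrak{u}^{a_m}R_1$ rather than an unordered product. This ordering is irrelevant to the argument, which uses only the additivity $\sum_m a_m=i$ of the distribution, and so presents no real obstacle.
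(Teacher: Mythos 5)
Your proof is correct, and it takes a genuinely different route from the paper's. The two arguments share only the use of projective normality to reduce to degree one. The paper pulls the problem back to $\Sym^k H^0(X,L)$ along the surjection $\Sym^k H^0(X,L)\to H^0(X,L^k)$, enlarges the unipotent radical $U$ to a maximal unipotent subgroup $T\subset GL(H^0(X,L))$, and computes the Loewy length of $\Sym^k H^0(X,L)$ as a $T$-module explicitly by a monomial-weight argument, using twice that Loewy length can only drop under passing to quotient modules and under restricting to a subgroup. You instead work directly on $R_k$: you identify $\scF_iR_k$ with the radical powers $\mathfrak{u}^iR_k$ (the infinitesimal form of the Jacobson-radical description in Lemma \ref{mult}), use that $\mathfrak{u}$ acts by derivations on a ring generated in degree one to prove the submultiplicative bound $\ll(k)\leq \ll(1)\,k$ by Leibniz plus pigeonhole, and then bound $\ll(1)\leq h^0(L)-1$ by strict decrease of the radical filtration. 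What your route buys is a more intrinsic statement---the slope is controlled by $\ll(1)$ itself, giving the marginally sharper bound $(h^0(L)-1)k$---and it avoids the auxiliary group $T$; the paper's route buys an exact evaluation of the Loewy length on the model $\Sym^k V$ with its full upper-triangular symmetry. One small repair: your appeal to Nakayama for ``a finite-dimensional algebra'' is imprecise, since $\mathfrak{U}(\mathfrak{u})$ is infinite-dimensional; justify $\mathfrak{u}V\subsetneq V$ for $V\neq 0$ instead by Engel's theorem (elements of $\mathfrak{u}$ act nilpotently because $U$ is unipotent), or by noting that any maximal proper submodule of $V$ has simple, hence trivial, quotient. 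This is cosmetic and does not affect correctness.
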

\begin{proof}
Let $V:=H^0(X,L)$. Since we have assumed that $L$ is projectively normal, we have a surjective map
$$\Sym^k V\to H^0(X,L^k).$$
The Loewy length of the domain is bigger than the Loewy length of the codomain (cf. \cite[Proposition I.3.7]{ASS}), so it is enough to prove the statement for $\Sym^k V$. The unipotent radical $U$ of $\Aut(X,L)$ is a subgroup of some maximal unipotent subgroup $T$ of $GL(V)$. The Loewy length of $\Sym^k V$ as $\Aut(X,L)$-module is smaller than its Loewy length as a $T$-module. Below we will show that the Loewy length of $\Sym^k V$ as a $T$ module is $k(\dim V -1)+1$, and this is enough to conclude the proof.

To compute the Loewy length of $\Sym^k V$ as a $T$ module fix a basis $e_i$ for which $T$ is the group of upper triangular matrices. Assign weight $n-i$ to $e_i$, where $n=\dim V$. Each monomial in $\Sym^k V$ now has a weight and there are exactly $k(n-1)+1$ different weights. The point is that this weight corresponds to the grade assigned by the Loewy filtration because $T$ can only increase the weight.
\end{proof}
Remark that in our examples the Loewy length is linear. It is an interesting question to ask for which class of varieties this is true; moreover, when the Loewy length \emph{is} a linear function, we do not know if its slope has a geometric meaning.

\begin{conjecture}\label{main-conj}Let $(X,L)$ be a polarised variety with non-reductive automorphism group. Then the Loewy filtration is polynomial, and destabilises $(X,L)$. In particular, $(X,L)$ is not K-polystable.\end{conjecture}
By destabilises we mean either that it has strictly negative Donaldson-Futaki invariant (which is the case in all the examples), or that the Donaldson-Futaki invariant is zero and one of the corresponding finitely generated filtrations that we can associate to $\scF$ via Theorem \ref{ApproximationTheorem} is not isomorphic to a product test configuration away from a closed subscheme of codimension two.

\end{subsection}

\begin{subsection}{Vanishing order filtration}\label{vanishing_order}

So far, we have just used representation theory and the geometry of $X$ never appeared. Motivated by understanding the geometry of the Loewy filtration, we introduce another related geometric filtration of the co-ordinate ring of $(X,L)$. We will use this filtration in Propositions \ref{fg} and \ref{fg2} to prove, in some cases, that the Loewy filtration is finitely generated.

Let $U$ be the unipotent radical of the automorphism group $\Aut(X,L)$, and let $E:=X^U$ be the fixed sub-scheme of the action of $U$ on $X$. There are a few general results about the geometry of $E$. The key one is Borel's fixed point theorem, which guarantees that $E$ is non-empty. Horrocks showed that $E$ is connected \cite{GH}. For related work, see \cite{Gross} when $X$ is projective and \cite{JL} for $X$ affine. We can define a multiplicative decreasing filtration on $R$ by the vanishing order along $E$, namely
$$\scV_iH^0(X,L^k):=H^0(X,L^k(-iE)).$$
We do not have a general bound on the length of this filtration. In all our examples $X^U$ is a reduced divisor; we do not know how general this fact is. In some cases, such as the Hirzebruch surfaces, this filtration equals the Loewy filtration; in other cases, such as $\P^2$ blown up at two points, the filtration by vanishing order is strictly included in the Loewy filtration.

\end{subsection}

\begin{section}{Examples}\label{examples}
In this section we describe the Loewy filtration of some varieties, giving a proof of Theorem \ref{intro_examples}. In all examples the filtration is polynomial, so we do not need to take approximations to compute the Donaldson-Futaki invariant. In all of our examples, the Donaldson-Futaki invariant is negative, confirming Conjecture \ref{main-conjecture} in these cases. Some examples are special cases of others, we include them for the sake of clarity. We will use Lemma \ref{unipotent_reduction} to compute the Loewy filtration.

\begin{subsection}{Degree 8 del Pezzo}\label{deg8}
Let $X$ be the blow-up of $\P^2$ at a point $p$. Fix an ample line bundle $ L=aH-bE$, recall that the ampleness is equivalent to $a>b>0$. Fix a basis $x,y,z$ of $H^0(\P^2, \O(1))$ such that $p=[1,0,0]$. In this basis, we have an identification
\begin{align*}R_k&:= H^0(X,kL), \\&= \Span \{ \mbox{degree } ka \mbox{ monomials such that }  \deg y + \deg z \geq bk \}.\end{align*}
The Hilbert polynomial is
\begin{align*}h(k)&=\sum_{i=0}^{ak-bk}(ak-i+1), \\ &=\frac{1}{2}(a^2-b^2)k^2+\frac{1}{2}(3a-b)k+1,\end{align*} which can also be seen from Riemann-Roch. We now describe the Loewy filtration. The automorphism group $G$ is the sub-group of $\P GL(3)$ fixing $p$; its unipotent radical $U$ is
\begin{displaymath}
U=
\left(
\begin{array}{ccc}
1 & * & * \\
0 & 1 & 0 \\
0 & 0  & 1 
\end{array}
\right).
\end{displaymath}
Recall that, in view of Lemma \ref{unipotent_reduction}, it is enough to study the action of $U$. On the space of sections we have the dual action, so this group fixes $y$ and $z$. This means that $\scF_1R_k$ is generated by all monomials for which $x$ does not appear with maximal degree; indeed, in this way $\scF_1R_k$ is a submodule and quotienting gives a trivial $U$-module. Remark that the maximal degree of $x$ in $R_k$ is $ak-bk$. More generally, we have 
\begin{align*}\scF_i(R_k) = \Span \{& \mbox{degree } ka \mbox{ monomials such that } \\ & \deg y + \deg z \geq bk \mbox{ and } \deg x \leq ak-bk-i\}.\end{align*}
For the associated graded modules we have 
\begin{align*} \gr_i(R_k)=\Span \{& \mbox{degree } ka \mbox{ monomials such that } \\ & \deg y + \deg z \geq bk \mbox{ and } \deg x=ak-bk-i\}.\end{align*}
As an example, for $a=3, b=1, k=1$, the graded module associated to the Loewy filtration of $H^0(X,-K_X)$ is
$$ \gr_0 = \Span \{x^2y,x^2z\}, \quad \gr_1 = \Span \{xy^2,xyz,xz^2\}, \quad\gr_2=\Span \{y^3,z^3, y^2z, yz^2 \}.$$

We wish to count the dimension of the weight space of weight $i$. Since $\deg x = ak-bk-i$, we have $\deg y + \deg z =bk+ i$. There are $bk+i+1$ polynomials in two variables of degree $bk+i$. Therefore the dimension of the weight space is $bk+i+1$. The weight polynomial is
\begin{align*}w(k)&=\sum_{i=0}^{ak-bk}i(bk+i+1), \\ &= \left(\frac{1}{3}a^3-\frac{1}{2}a^2b+\frac{1}{6}b^3\right)k^3+\left(a^2-\frac{3}{2}ab+\frac{1}{2}b^2\right)k^2+\left(\frac{2}{3}a-\frac{2}{3}b\right)k. \end{align*}

The numerator of the Donaldson-Futaki invariant is 
$$\DF_{num}=-\frac{1}{6}b^4\left(\frac{a}{b}-1\right)^3.$$ 
This is negative \emph{exactly} when $a>b$, which is required for ampleness. The Loewy length is $\ll(k) = (a-b)k$. The norm is 
$$\|\scF\|_2=\frac{a^4}{4}-\frac{2(a^3-b^3)^2}{9(a^2-b^2)},$$ 
which is positive for $a>b$.  For $a=b$ the Donaldson-Futaki invariant vanishes. This is not surprising because, in this case, we are dealing with a line bundle which is a bull-back from $\P^1$, which is K-polystable.

\end{subsection}

\begin{subsection}{$\pr^2$ blown up at $n$ points on a line}

Let $X$ be the blowup of $\P^2$ at $n$ points $p_1,\hdots,p_n$ on a line, with $p_1 = [1:0:0]$ and $p_2=[0:1:0]$. The picard rank is $\rho(X) = n+1$, generated by $H$, which is the pullback of the hyperplane class from $\P^2$, and the $n$ exceptional divisors $E_1,\hdots,E_n$. To check a line bundle $L$ is ample on $X$, it suffices to show it has positive intersection with $H-E_1-\hdots-E_n$ and the exceptional divisors $E_1,\hdots,E_n$. The ample cone is therefore those $L=aH-b_1E_1-\hdots-b_nE_n$ such that $a>b_1+\hdots+b_n$, with $a,b_1,\hdots,b_n>0$. For simplicity for the rest of this calculation we assume $b_1=\hdots=b_n=:b$, so that the condition for ampleness becomes $a>nb$. We also assume that $n\geq 2$, having considered the $n=1$ case in Section \ref{deg8}. Denote $L=aH-bE_1-\hdots-bE_n$, and $R_k = H^0(X,kL)$. For ease of notation denote $E = E_1+\hdots+E_n$. $E$ has the property that $E.E = -n$, $L.E = nb$. 

The anti-canonical class of $X$ is $-K_X = 3H-E$, which intersects $L$ as $-K_X.L = 3a-nb$. $L$ has self-intersection $L.L = a^2-nb^2>0$. By Riemann-Roch the Hilbert polynomial is given as $$h(k) = \frac{a^2-nb^2}{2}k^2 + \frac{3a-nb}{2}+1.$$

The automorphism group of $X$ is the subgroup of $PGL(3)$ consisting of $3\times 3$ matrices which either fix the line $[a:b:0]$ or permute some of the $n$ points on the line. The permutations do not lie in the connected component of the identity, so the maximal normal unipotent subgroup of $\Aut(X)$, which we denote by $U$, is given by matrices of the form \begin{displaymath}
U=
\left(
\begin{array}{ccc}
1 & 0 & * \\
0 & 1 & * \\
0 & 0  & 1 
\end{array}
\right).
\end{displaymath} Note that matrices of this form fix the line which joins the blown-up points. 

The space $R_k$ is the space of degree $ak$ polynomials which vanish with order at least $kb$ at each point $p_i$; therefore it contains all polynomials whose degree in $z$ is at least $bk$, and some of the others. The action of $U$ on $H^0(\P^2,\O(1))$ fixes exactly $z$, so the Loewy filtration is
$$\scF_iR_k = \{  \mbox{monomials in } R_k \mbox{ such that } \deg z \geq i \}.$$

For $i\geq kb$, the Loewy filtration is 
$$\scF_iR_k = \{ \deg ka \mbox{ monomials with } \deg z \geq i \}.$$
 Thus, for $i\geq kb$, we have $\dim \scF_iR_k - \dim \scF_{i-1}R_k = ka-i+1$. 

We now calculate the dimensions of the spaces $\scF_iR_k$ for $i < kb$. Fixing some $\deg z=i$, we wish to count the number of degree $ka-i$ polynomials in $2$ variables which vanish along $n$ points on a line with order at least $kb-i$ at each point $p_i$. Such polynomials have weight $i$, and vanish order at least $kb$ along $n$ points. The number of such polynomials is given by Riemann-Roch on $\P^1$. Indeed, let $$M=(ka-i)\O_{\P^1}(1) - (kb-i)p_1 - \hdots - (kb-i)p_n,$$ where we consider those points to be in $\P^1$. By Riemann-Roch for curves we have \begin{align*}\dim H^0(\P^1, M) &= \deg M +1, \\ & = ka-i - n(kb-i) +1, \\&= k(a-nb)+(n-1)i+1.\end{align*} So for $i\leq kb$ we have $$\dim \scF_iR_k - \dim \scF_{i-1}R_k = k(a-nb)+(n-1)i+1.$$ The Loewy length is $\ell \ell(k)=ak$. Using this method, the Hilbert polynomial is $$h(k)=\sum^{kb-1}_{i=0} (k(a-nb)+(n-1)i+1) + \sum^{ka}_{i=kb}(ka-i+1),$$ which agrees with the calculation using Riemann-Roch. Note that when $i=kb$ the two summands are equal.  The weight polynomial is $$w(k)=\sum^{kb-1}_{i=0} i(k(a-nb)+(n-1)i+1) + \sum^{ka}_{i=kb}i(ka-i+1).$$ 

The weight polynomial has highest terms $$b_0 = \frac{a^3-nb^3}{6},\qquad b_1 = \frac{a^2}{2}.$$ The numerator of the Donaldson-Futaki invariant is \begin{align*}12\DF(\scF)_{num}&=12(b_0a_1-b_1a_0), \\ & = (a^3-nb^3)(3a-bn) - 3(a^2)(a^2-nb^2).\end{align*} Expanding and setting $c=\frac{a}{b}$ we get $$\frac{12}{b^4}\DF(\scF)_{num}=-nc^3+3nc^2-3nc+n^2.$$ The condition $a>nb$ becomes $c>n\geq 2$. Dividing by $n$ we wish to show $$-c^3+3c^2-3c+n<0.$$ Note that when $n=c$ this is given as $c(c(3-c)-2),$ which is less than or equal to zero when $c\geq 2$. Its derivative is $-3(c-1)^2$, which is negative when $c>1$. In particular, this is a decreasing polynomial in $c$ when $c>1$, which is negative when $c=2$. So it is negative for all $c>2$, therefore this filtration destabilises. The norm is given as $$\|F\|_2 = \frac{1}{36}\left(3(a^4-b^4n) - 2\frac{a^3-nb^3}{a^2-nb^2}\right),$$ setting $c=\frac{a}{b}$ and using $c>n\geq 2$ and $b>0$ one sees that this is strictly positive.

\end{subsection}
\begin{subsection}{An orbifold del Pezzo surface}\label{orbifold}
This example is based on the analysis developed in the previous section, hence we will keep the same notations. Let $\mu\colon X\to \P^2$ be the blow up of $\P^2$ at $3$ points $p_i$ on a line $\ell$. Let $\hat{\ell}$ be the proper transform of $\ell$, its class is $H-E$. The divisor $\hat{\ell}$ is a $-2$ curve so we can consider its blow-down
$$\nu\colon X\to F.$$ 
The variety $F$ is a singular Fano; the singular point is an $A_1$ singularity so the surface is an orbifold. The singularity is rational, so $\nu^*K_F=K_X$ (this can be shown also by explicit computation). For any line bundle $L$ on $F$, the pull-back defines an isomorphism
$$
\nu^*\colon H^0(F,L)\to H^0(X,\nu^*L).
$$
Since $\hat{\ell}$ is fixed by $U$, the isomorphism is an isomorphism of $U$-modules, so it preserves the Loewy filtration. This means that the Loewy filtration, and its Donaldson-Futaki invariant, can be equivalently computed on $X$ or on $F$. Line bundles on $X$ which are of the form $\nu^*L$ are the ones with zero intersection with $\hat{\ell}$, therefore we are interested in multiples of $3H-E$, which is actually the anti-canonical class of $X$. Plugging $c=n=3$ in the formula for the numerator of the Donaldson-Futaki invariant obtained in the previous section, we see that the Loewy filtration destabilises $(F,-K_F)$.

\end{subsection}

\begin{subsection}{Hirzebruch surfaces}
We consider the Hirzerburch surface
$$X=\P ( \O\oplus \O(n)).$$
This is a $\P^1$-bundle over $\P^1$. Denote by $H$ the pull-back of the hyperplane from $\P^1$ and $\O(1)$ the tautological line bundle. Let $$L=a\O(1)+bH,$$ this is ample if and only if $a,b>0$. Pushing forward to $\P^1$, we have
\begin{align*}R_k:&= H^0(X,kL), \\&=\bigoplus_{i=0}^{ka}H^0(\P^1,\O_{\pr^1}(bk+in)).\end{align*}
The Hilbert polynomial is
\begin{align*}h(k)&=\sum_{i=0}^{ak}(bk+in+1),\\ &=k^2(\frac{1}{2}a^2n+ab)+k(\frac{1}{2}an+a+b)+1.\end{align*}

Let us describe the Loewy filtration. The unipotent radical of the automorphism group is $H^0(\P^1,\O(n))$; it acts on the total space of $\O\oplus\O(n)$ as the upper triangular matrices \cite[Section 5.11]{Debarre}\cite{Levine}. That is, a section $s$ of $H^0(\P^1,\O(n))$ maps an element $c\oplus 0$ of $\O\oplus \O(n)$ to $c\oplus s$. The induced action on $R_k$ maps $H^0(\P^1,\O(bk+in))$ to $H^0(\P^1,\O(bk+in))\oplus H^0(\P^1,\O(bk+(i+1)n))$. The Loewy filtration is thus
$$\scF_iH_k=\bigoplus_{j=i}^{ka}H^0(\P^1,\O(bk+jn)).$$
The graded modules are
$$
\gr_i=H^0(\P^1,\O(bk+in)).
$$
The weight polynomial is
$$w(k)=\sum_{i=0}^{ak}i(bk+in+1),$$
so that
$$b_0=\frac{1}{3}a^3n+\frac{1}{2}a^2b,\qquad b_1=\frac{1}{2}a^2n+\frac{1}{2}a^2+\frac{1}{2}ab. $$
The numerator of the Donaldson-Futaki invariant is
$$ \DF_{num}=-\frac{1}{12}a^4n^2+\frac{1}{12}a^4n-\frac{1}{6}a^3bn,   $$ 
which is negative. The Loewy length is $\ll(k) = ak$. The norm is 
$$\|\scF\|_2=\frac{a^3(a^2n^2+6abn+6b^2)}{36(an+2b)},$$
which is positive for $a,b>0$.
Remark that for $n=1$ the Hirzebruch surface is isomorphic to the blow up at $\P^2$ at one point, and the Loewy filtrations using both descriptions coincide. This can be checked by an explicit computation. Indeed, to compare the two descriptions take $b=b'$ and $a=a'+b'$, where $a$ and $b$ are the parameters appearing in the del Pezzo description, and $a'$ and $b'$ are the parameters appearing in the projective bundle description. 

We now use the filtration defined in Section \ref{vanishing_order} to show that the Loewy filtration is finitely generated.
\begin{proposition}\label{fg}
The Loewy filtration of a Hirzebruch  surface equals the filtration by vanishing order along the fixed locus. Moreover, it is finitely generated.
\end{proposition}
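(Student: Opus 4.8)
The plan is to establish both assertions geometrically: first pin down the fixed locus $E=X^U$, and then recognise the Loewy filtration as the weight filtration of a single one–parameter subgroup, which will deliver the comparison with $\scV_\bullet$ and finite generation at one stroke.

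First I would identify $E$. Writing $V=\O\oplus\O(n)$, the unipotent radical $U=H^0(\P^1,\O(n))$ acts on each fibre of $X=\P(V)\to\P^1$ as a one–parameter unipotent subgroup of $\mathrm{PGL}_2$, which fixes a single point; letting the base point vary, these points sweep out the section $E=\P(\O(n))\subset X$ cut out by the vanishing of the $\O$–coordinate. This is the negative section: it is a smooth, reduced rational curve with $E^2=-n$ and class $E=\O(1)-nH$ in $\Pic(X)=\Z\langle\O(1),H\rangle$. As $X$ is smooth, $E$ is a reduced divisor with $E^{sm}\cap X^{sm}=E\neq\emptyset$, so the Lemma comparing $\scV_\bullet$ with the Loewy filtration applies and gives the inclusion $\scV_iR_k=H^0(X,kL-iE)\subseteq\scF_iR_k$ for all $i,k$.

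To promote this to an equality I would match dimensions. From $E=\O(1)-nH$ one computes $kL-iE=(ak-i)\O(1)+(bk+in)H$, and pushing forward to $\P^1$ using $\pi_*\O_X(m)=\Sym^mV=\bigoplus_{j=0}^m\O(jn)$ yields
\[
h^0(X,kL-iE)=\sum_{j=0}^{ak-i}h^0\bigl(\P^1,\O(jn+bk+in)\bigr)=\sum_{j=i}^{ak}(bk+jn+1),
\]
which is exactly $\dim\scF_iR_k$ read off from the explicit description of the Loewy filtration obtained above. Together with the inclusion this forces $\scV_i=\scF_i$, proving the first assertion.

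For finite generation, the key observation is that $\scF=\scV$ is the weight filtration of the one–parameter subgroup $\G_m\subset\Aut(X,L)$ scaling the $\O(n)$–factor of $V$: a section lying in the weight–$j$ summand $H^0(\P^1,\O(bk+jn))$ has $\G_m$–weight $j$, which coincides with its order of vanishing along $E$. This $\G_m$ determines a product test configuration whose associated filtration is precisely $\scF$; since test configurations correspond to finitely generated filtrations (the correspondence recalled in Section \ref{testconfigurations}), $\scF$ is finitely generated. Equivalently, the $\G_m$–grading splits the filtration, so $\gr(\scF)\cong R(X,L)$ as graded algebras and is a finitely generated $\C$–algebra, whence $\Rees(\scF)$ is finitely generated over $\C[t]$.

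I expect the main obstacle to be the bookkeeping that identifies $E$ correctly as the negative section and pins down its class $\O(1)-nH$; once this is in hand, the $\Pic$ computation of $kL-iE$ and the Riemann--Roch dimension count are routine, and the recognition of $\scF$ as a $\G_m$–weight filtration simultaneously matches vanishing orders along $E$ and delivers finite generation.
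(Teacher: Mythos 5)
Your proof is correct, and while the first half runs parallel to the paper, your route to finite generation is genuinely different. For the equality $\scV_\bullet=\scF_\bullet$ both arguments share the same computational core --- the identification of the fixed locus $E$ as the negative section of class $\O(1)-nH$ and the pushforward $H^0(X,kL-iE)=\bigoplus_{j=i}^{ak}H^0(\P^1,\O(bk+jn))$ --- but you organise it as ``general inclusion lemma plus dimension count,'' whereas the paper simply observes that the pushforward exhibits $\scV_iR_k$ as the span of the summands of $\O(n)$-degree at least $i$, which is the Loewy filtration; both are fine (note only that the paper's displayed formula $\scF_iH_k=\bigoplus_{j=i+1}^{ka}$ carries an index shift inconsistent with $\scF_0R_k=R_k$, so your dimension match holds for the correctly normalised description, which is the one you use). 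The real divergence is the second assertion: the paper writes the Rees algebra as $\bigoplus_{d,m}H^0(X,dL-mE)$ and invokes the fact that Hirzebruch surfaces are toric, while you recognise $\scF$ as the weight filtration of the one-parameter subgroup of $\Aut(X,L)$ scaling the $\O(n)$-summand, i.e.\ the filtration of a product test configuration, so that the $\G_m$-grading splits the filtration, $\gr(\scF)\cong R(X,L)$ as graded algebras, and the Rees algebra is generated over $\C[t]$ by the elements $g_lt^m$ with $0\le m\le \wt(g_l)$ for bihomogeneous generators $g_l$ of $R$. Your argument buys something: it avoids toric geometry, it is self-contained (the splitting argument does not lean on the correspondence with test configurations, which the paper records only at the level of equal Donaldson--Futaki invariants), and it makes explicit that the destabilising Loewy filtration here is induced by a holomorphic vector field, which dovetails with the paper's subsequent remark identifying the associated test configuration with the deformation to the normal cone of the $-n$-curve at the extremal parameter $a$. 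What the paper's toric argument buys in exchange is portability: it transfers verbatim to the higher-rank projective-bundle examples treated later, where your $\G_m$ still exists but the bookkeeping of the splitting is heavier.
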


\begin{proof}
The fixed locus of the action of $U$ on $X$ is the $-n$ curve $E$. This curve has class $E=\O(1)-nH$. We have
\begin{align*} \scV_iR_k&=H^0(X,ka\O(1)+kbH-iE), \\&=H^0(X,(ak-i)\O(1)+(kb+ni)H).\end{align*} 
Pushing forward to $\P^1$ we get
\begin{align*} H^0(\P^1,\O(kb+in)\otimes \Sym^{ak-i}(\O\oplus\O(n)))&=\bigoplus_{j=0}^{ak-i}H^0(\P^1,\O(kb+(i+j)n)), \\ &=\bigoplus_{j=i}^{ak}H^0(\P^1,\O(kb+jn)).\end{align*}
This coincides with the Loewy filtration. The Rees algebra associate to this filtration is isomorphic to
$$\bigoplus_{d,m}H^0(X,dL-mE).$$

We claim this ring is finitely generated. This is a standard argument that follows from the Minimal Model Program, using that $X$ is a toric variety, hence a log Fano variety. Set $F$ to be an integral divisor lying on the boundary of the effective cone of $X$ which is linearly equivalent to $aL-bE$, with $a,b$ positive integers. $F$ is automatically effective as the effective cone of a log Fano variety is rational polyhedral, hence closed \cite{BCHM}. Using \cite[Lemma 2.3.3]{CortiFlips}, it is enough to show the ring $$R(X,L,F)= \bigoplus_{m',d'\in \N^2}H^0(X,d'L+m'F)$$ is finitely generated. Since $X$ is toric, there exists an ample $\Q$-divisor $A$ and an effective $\Q$-divisor $\hat{E}$ with $$K_X+A+\hat E\sim 0.$$ Set $\Delta_1=A+\hat E+\frac{1}{n_1}L$ and $\Delta_2 = A+\hat E+\frac{1}{n_2}F$ with $n_1,n_2\gg 0$ chosen so that $D_i=K_X+\Delta_i$ are divisorially log terminal divisors for $i=1,2$. Remark that $D_1\sim \frac{1}{n_1}L$, and $D_2\sim \frac{1}{n_2}F$. By \cite[Corollary 1.1.9]{BCHM}, the ring $$\bigoplus_{m'',d''\in \N^2}H^0(X,\floor{d''D_1+m''D_2})$$ is finitely generated. Using again \cite[Lemma 2.3.3]{CortiFlips}, this is equivalent to finite generation of $R(X,L,F)$, as required.

\end{proof}
Following \cite[Section 8]{DWN}, the test configuration associated to this filtration is the deformation to the normal cone of the $-n$-curve with parameter $a$. To identify the two filtrations we used that Loewy length is linear.

\end{subsection}

\begin{subsection}{Projective bundles $\P (\O^{\oplus r } \oplus \O(n))$ over $\P^s$ - partial computation}\label{GeneralProjectiveBundle}

This is a generalisation of the Hirzerbrch surface example. Let $X=\P (\O^{\oplus r} \oplus \O(n))$ over $\P^s$. Take as polarisation $L=a \O(1)+bH$, where $H$ is the pull-back of the hyperplane section of $\P^s$. We take $a$,$b$ and $n$ strictly positive. Pushing-forward we have
$$H^0(X,kL)=H^0(\P^s,\O(bk)\otimes \Sym^{ka}(\O^{\oplus r}\oplus \O(n))).$$
The right hand side is isomorphic to
$$  \bigoplus _{J}H^0(\P^s,\O(bk+J_{r+1}n)),$$
where the sum runs over all partitions $J=(J_1,\cdots , J_{r+1})$ of $ak$ into $r+1$ non-negative numbers. Here we are just writing out monomials in $r+1$ variables, $J_{r+1}$ is the exponent of $\O(n)$. The unipotent radical of the automorphism group is $H^0(\P^1,\O(n))^{\oplus r}$, see \cite[Section 5.11]{Debarre}. It maps $\O^{\oplus r}$ to $\O^{\oplus r}\oplus \O(n)$. The graded module associated filtration induced by the action is
$$ \gr_i=H^0(\P^s,\O(bk+in))^{\oplus P(ak-i,r)},$$
where $P(ak-i,r)$ is the number of partition of $ak-i$ into $r$ non-negative numbers. The index $i$ ranges from $0$ to $ak$. That is, $\gr_i$ is isomorphic to the sub-vector space where $\O(n)$ appears with multiplicity exactly $i$.  The Loewy length is $\ell \ell(k)=ak$. The Hilbert polynomial is
$$ h(k)=\sum_{i=0}^{ak}\binom{bk+in+s}{s}P(ak-i,r).$$
The weight polynomial is
$$ w(k)=\sum_{i=0}^{ak}i\binom{bk+in+s}{s}P(ak-i,r).$$

Using the vanishing order filtration as defined in Section \ref{vanishing_order}, we now show that the Loewy filtration is finitely generated.
\begin{proposition}\label{fg2}
The Loewy filtration equals the filtration by vanishing order along the fixed locus. Moreover, it is finitely generated.
\end{proposition}
\begin{proof}
The fixed locus $E$ of the action of $H^0(\P^1,\O(n))^{\oplus r}$ is the reduced divisor corresponding to the quotient
$$
\pi \colon \O^{\oplus r}\oplus \O(n)\to \O^{\oplus r}
$$
because $\O(n)$ is the maximal sub-vector bundle of $\O^{\oplus r}\oplus \O(n)$ which is $H^0(\P^1,\O(n))^{\oplus r}$ invariant. A section of $kL$ vanishes along $E$ if and only if it is in the kernel of $\Sym^k (\pi)$, so the Loewy filtration equals the filtration by vanishing order along $E$. The Rees algebra of this filtration is isomorphic to
$$ \bigoplus_{m,d\in \N^2}H^0(X,dL-mE).$$ This is finitely generated using the same argument as Proposition \ref{fg}.

\end{proof}
Counting the number of partitions of an integer is well-known to be a difficult problem, in the sequel we will carry out the computation in some special cases.
\end{subsection}

\medskip
\begin{subsection}{Projective bundles $\P (\O\oplus \O \oplus \O(n))$ over $\P^1$}
Keeping the notation of the previous section, take $s=1$ and $r=2$. In particular, $P(ak-i,2)=ak-i+1$. The variety $X$ now has dimension 3. In this set up, the Hilbert polynomial is
$$ h(k)= \sum_{i=0}^{ak}(ak-i+1)(bk+in+1),$$
and  so
$$a_0=\frac{1}{6}a^3n+\frac{1}{2}a^2b, \qquad a_1=\frac{1}{2}a^2n+\frac{1}{2}a^2+\frac{3}{2}ab.$$
The weight polynomial is
$$ w(k)= \sum_{i=0}^{ak}i(ak+1)(bk+in+1),$$
this gives 
$$b_0=\frac{1}{12}a^4n+\frac{1}{6}a^3b \qquad b_1=\frac{1}{3}a^3n+\frac{1}{6}a^3+\frac{1}{2}a^2b.$$

The numerator of Donaldson-Futaki invariant is
 $$\DF_{num}=-\frac{1}{72}a^6n^2+\frac{1}{72}a^6n-\frac{1}{24}a^5bn.$$
This is negative since $n^2$ is bigger than $n$. The norm is
$$\|\scF\|_2=\frac{1}{144}a^3(18a^4bn+6a^4n^2+24a^3b^2+8a^3bn-an-2b),$$
which is positive as $a$,$b$ and $n$ are positive integers.
\end{subsection}

\begin{subsection}{Projective bundle $\P( \O\oplus \O(1))$ over $\P^2$}
Let $$X=\P( \O_{\P^2}\oplus \O_{\P^2}(1)).$$
We have
$$-K_X=2(\O(1)+H).$$
To ease notation we denote $L= -\frac{1}{2}K_X$. Using the formulae of Section \ref{GeneralProjectiveBundle}, the Hilbert polynomial is
$$h(k)=\sum_{i=0}^k\binom{2+i+k}{i+k},$$
giving
$$a_0= \frac{7}{6}, \qquad a_1=\frac{7}{2}.$$

The unipotent radical of the automorphism group is $H^0(\P^2,\O(1))$. The graded modules associated to the Loewy filtration are
$$
\gr_i=H^0(\P^2,\O(i+k)).
$$
The weight polynomial is
$$w(k)=\sum_{i=0}^ki\binom{2+i+k}{i+k},$$
so
$$ b_0= \frac{17}{24}, \qquad b_1=\frac{9}{4}.$$
The numerator of the Donaldson-Futaki invariant is
$$\DF_{num}=-\frac{7}{48},$$
which is negative. The Loewy length is $\ll(k) = k$. The norm is $\|\scF\|_2=\frac{97}{1120}$. \end{subsection}

\end{section}

\bibliography{Loewy.bib}{}
\bibliographystyle{plain}

\vspace{4mm}

\end{document}